\numberwithin{equation}{section}
\newcommand{\R}{\mathbb{R}}
\newcommand{\RRR}{\mathbb{R}^{3{\times}3}}
\newcommand{\Sph}{\mathbb{S}}
\newcommand{\pa}{\partial}
\newcommand{\na}{\nabla}
\newcommand{\Om}{\Omega}
\newcommand{\dist}{{\rm dist}}
\newcommand{\dd}{{\rm d}}
\newcommand{\SO}{{\rm SO}}
\newcommand{\Sym}{{\rm Sym}}
\newcommand{\rmD}{{\rm D}}
\newcommand{\tr}{{\rm tr}}
\newcommand{\sym}{{\rm sym}\,}
\newcommand{\Ss}{G}
\mathchardef\emptyset="001F
\definecolor{vgreen}{rgb}{0.1,0.5,0.2}
\definecolor{viola}{RGB}{85,26,139}
\definecolor{citegreen}{rgb}{0,0.6,0}
\definecolor{refred}{rgb}{0.8,0,0}
\newtheorem{theorem}{Theorem}[section]
\newtheorem{remark}[theorem]{Remark}
\newtheorem{assumption}[theorem]{Assumption}
\newtheorem{lemma}[theorem]{Lemma}
\newtheorem{notation}[theorem]{Notation}
\begin{document}
\title[Rigorous derivation of active plate models for 
thin sheets of nematic elastomers]
{Rigorous derivation of active plate models for \\
thin sheets of nematic elastomers}

\author[V.~Agostiniani]{Virginia Agostiniani}
\address{SISSA, via Bonomea 265, 34136 Trieste - Italy} 
\email{vagostin@sissa.it}

\author[A.~DeSimone]{Antonio DeSimone}
\address{SISSA, via Bonomea 265, 34136 Trieste - Italy}
\email{desimone@sissa.it}

\begin{abstract} 
In the context of finite elasticity, we propose plate models describing the spontaneous bending of 
nematic elastomer thin films due to variations along the thickness of the nematic order parameters.
Reduced energy functionals are deduced from 
a three-dimensional description of the system
using rigorous dimension-reduction techniques,
based on the theory of $\Gamma$-convergence.
The two-dimensional models are 
nonlinear plate theories in which deviations 
from a characteristic target curvature tensor 
cost elastic energy.
Moreover, the stored energy functional
cannot be minimised to zero, thus
revealing the presence of residual stresses, 
as observed in numerical simulations.
The following three nematic textures are considered:
\emph{splay-bend} and \emph{twisted} orientation
of the nematic director, and uniform director perpendicular to the mid-plane of the film, with variable degree of nematic order
along the thickness.
These three textures realise three very different structural models: one with only one  stable spontaneously bent configuration,  
a bistable one with two oppositely curved configurations of minimal energy, and a shell with zero stiffness to twisting.
\end{abstract}

\maketitle




\section{Introduction}


The interest in designing objects whose shape can be controlled at 
will through the application of external stimuli 
is fuelling a renewed interest in questions 
at the interface between elasticity and geometry.
Which shapes are accessible to 
elastic sheets through the prescription of non-euclidean metrics that model states of pre-stress or pre-stretch induced by phase-transitions, plastic deformations, or growth
 \cite{Kle_Efr_Sha_2007}?
Besides their fundamental
mathematical interest \cite{Ciarlet_book,Lew_Pak2011}, 
these questions are very relevant in biology 
(e.g., in morphogenesis where shape emerges from
growth and remodelling processes) 
and engineering 
(e.g., for motion-planning problems in soft robotics and,
more generally, for the design of bio-inspired structures with programmable shapes).

A general paradigm to generate bending deformations in thin films 
is to induce non-constant strains through the thickness\footnote{Another route, 
which exploits Gauss' Theorema Egregium,
is to induce curved configurations through 
nematic director textures generating  
(spontaneous) strains that are constant along the thickness 
but variable in the in-plane direction in such a way
to be incompatible with having zero Gaussian
curvature 
\cite
{Aha_Sha_Kup,Arroyo2014,Arroyo2012,Bat_Lew_Sch,LucantonioJCP2017,Bat_Mod_War,
Mod_War2015,Mostajeran,Pismen}.}.
These can in turn be triggered by the spontaneous strains associated with a phase transformation.
An example 
is provided by strips of nematic elastomers in which 
specific textures of the director have been imprinted in the material at fabrication.
The process relies on pouring a nematic liquid between two plates which have been
treated to induce a given uniform alignment 
of the director on one of them and a different one
on the other one. 
This induces a non-constant director profile which is then frozen in
the material by the photo-polymerization process that transforms 
a liquid crystal into a nematic elastomer.
When the isotropic-to-nematic phase transformation takes place, 
the spontaneous deformations associated with it induce differential expansions along the film thickness, 
and hence curvature of its mid-surface.
We refer the interested reader to, e.g.,  
\cite{hybrid,Sawa2011,Urayama2013}
for more details about the preparation of such materials, and to  \cite{AgDe2,BarDeS,Ce_DeSim,CoDeDo,DeS99,DeSim_Dolz,DeTe,FukDyn} and the references quoted therein for further information on the mathematical modelling of their interesting behaviour.

Two-dimensional models (plate models) for the bending deformation of thin films 
made of active material have already 
been proposed in the literature. 
They account for bending deformations through a curvature tensor 
(second fundamental form 
of the deformed mid-surface). 
The bending energy penalizes deviations of the curvature from
a characteristic target curvature arising from the 
spontaneous strains 
triggered by a phase transition.
Expressions for these bending energies are
typically \emph{postulated} on the basis of symmetry arguments, or deduced formally from an ansatz on the 
displacement fields (Kirchoff-Love assumption). By contrast, in our approach two-dimensional energy densities and target curvatures are \emph{deduced}
from 3D elasticity, i.e., from those geometric and material parameters that are available to the material scientists synthesizing the material and shaping it into a thin film.

In this paper, 
employing rigorous dimension reduction techniques based on the theory of $\Gamma$-convergence, and 
following \cite{Schmidt2007}, we derive  new models for
the bending behavior of thin films made of nematic elastomers
in the regime of large deformations.
Starting from three-dimensional finite elasticity, and considering 
the limit of a vanishingly small thickness, 
we obtain the following two-dimensional 
reduced ``energy'' functional
\begin{equation}
\label{energia_limite}
\mathscr E^{lim}(y)
\,=\,
\frac{\mu}{12}\int\limits_{\omega}
\left\{\Big|{\rm A}_y(x')\,-\,\overline A\Big|^2
+\gamma
\Big(\tr{\rm A}_y(x')-\tr\overline A\Big)^2\right\}\dd x'
+
\bar e,
\end{equation}
whenever $y$ is an isometry mapping $\omega$ 
(the planar domain representing the reference 
configuration of the mid-surface of the film) into $\R^3$.
Specific expressions of the 2D limit energy in terms of the parameters typically used in the theory of plates (such as the plate bending modulus) are given
given in the right-hand-sides of \eqref{phys_stat_splay} and \eqref{phys_stat_twist}.
In \eqref{energia_limite} above, 
which is an expression of the type proposed in 
\cite{Aha_Sha_Kup,Kle_Efr_Sha_2007,Armon}
to model the shaping of elastic sheets or of
biological tissues,
the symbol $A_y$ denotes the curvature tensor, namely,  the second fundamental
form associated with the deformed configuration $y(\omega)$
and the coefficients $\mu$ and $\gamma$ are positive constants
(material parameters characterising  the three-dimensional stored energy
density of the material).
Moreover, the $2{\times}2$ symmetric matrix $\overline A$ is the target curvature tensor
and $\bar e$ is a nonnegative constant.
The characteristic quantities
$\overline A$ and $\bar e$ are deduced from the three-dimensional model and given by
explicit formulas, 
issuing from the specific variation of the
spontaneous strain along the thickness.
The constant $\bar e$ is irrelevant in the selection of energy minimising or equilibrium shapes $y$. However, a term $\bar e>0$
is typical for those cases in which the spontaneous strains of the 3D model are not kinematically compatible\footnote{To put the incompatible nematic elastomer cases in perspective,
we also analyze a kinematically compatible case where the 
three-dimensional spontaneous strain distribution
along the thickness depends quadratically on the thickness variable.
We show that, as expected, this distribution leads to 
plates with no residual stresses and where the limiting energy 
corresponding to \eqref{energia_limite} attains its minimum value zero. 
}. Thus, just like its parent 3D energy functional, the limit 2D energy \eqref{energia_limite} 
can never be minimised to zero: there  will always be energy trapped in the system, indicating the presence of residual stresses.

Two special geometries of the director field are of particular interest, 
since they have been realized in practice in the laboratory.
In the \emph{splay bend} geometry (SB),
also called \emph{hybrid} in  
\cite{hybrid,Urayama2013},
the explicit formulas we obtain 
for $\overline A$ and $\bar e$ are 
\begin{equation*}
\overline A
\,=\,
\overline A_{SB}(\delta_0)
\,=\,
\frac{12\,\delta_0}{\pi^2}\,
{\rm diag}\big(-1,0\big)
\qquad\mbox{and}\qquad
\bar e
\,=\,
\bar e_{SB}(\delta_0)
\,=\,
\mu\,(1+\gamma)\delta_0^2\left(\frac{\pi^4-12}8\right)|\omega|,
\end{equation*}
where $\delta_0$ is a positive constant, with dimension of inverse length,
which quantifies the variability along the thickness of 
the spontaneous strain 
(see \eqref{bar_c_h}--\eqref{a_h} and the first formula in \eqref{eq:cose}).
We recall that, 
in a three-dimensional film with splay-bend geometry,
the director continuously rotates by $\pi/2$ from planar to vertical alignment (see \eqref{director_splay} and Figure \ref{fig:splay_twist}).
The other geometry we consider is the \emph{twisted} one (T),
see \cite{Sawa2011} and \cite{Urayama2013},
where instead the director (continuously) rotates perpendicularly to the vertical axis
from a typical orientation at the bottom of the film to another
typical orientation at the top of the film 
(see \eqref{director_splay} and Figure \ref{fig:splay_twist}).
In this case, it turns out that
\begin{equation*}
\overline A
\,=\,
\overline A_T(\delta_0)
\,=\,
\frac{12\delta_0}{\pi^2}\,
{\rm diag}\big(-1,1\big)
\qquad\mbox{and}\qquad
\bar e
\,=\,
\bar e_T(\delta_0)
\,=\,
\frac{\mu\,\delta_0^2}{\pi^4}
\left(\frac{\pi^4-4\pi^2-48}2\right)|\omega|.
\end{equation*}
The difference in the formulas for the two cases arises because of a different distribution
of spontaneous strains along the thickness, 
see \eqref{bar_c_h}--\eqref{a_h} and \eqref{director_splay}.

The two geometries of the director field described above lead to plates with two very different structural behaviours.
Both of them arise from kinematically incompatible spontaneous strains,
which generate residual stresses
leading to a strictly positive constant $\bar e$.
They differ in the fact that in the splay-bend geometry, 
the
integral term in \eqref{energia_limite} can be minimised to zero
(by any developable surface $y(\omega)$ whose second fundamental form $A_y$
coincides with $\overline A_{SB}$).
Hence, the target curvature $\overline A_{SB}$ is the curvature 
the plate spontaneously exhibits in the absence of external loads
(spontaneous curvature). 
By contrast, in the twisted case, also
the minimum of the integral term is strictly positive.
In fact, 
there exists no isometry $y$ such that $A_y\equiv\overline A_T$,
because in a developable surface the product of the principal curvatures
must be zero at each point of the surface.
This means that, in fact,  the target curvature $\overline A_T$ is
never observed in the absence of external loads. 

The spontaneous curvature exhibited  in the absence of external loads by a nematic film with  twisted texture  cannot be read off directly from the target curvature, but is has to be computed by minimising 
the integrand  in \eqref{energia_limite}, subject to the isometry constraint. It turns out that this system has \emph{two} distinct configurations of minimal energy, with opposite curvature, hence it is \emph{bistable}. 
By contrast, in the splay-bend case, there is only one stable bent configuration. Motivated by these observations, we also consider the following geometry for the nematic parameters: 
a uniform director orientation perpendicular to the mid-plane of the film, with variable degree of nematic order
along the thickness. Even though this configuration has not yet been realised in the laboratory, it leads to a very interesting mechanical behaviour. Namely, a structure possessing a continuum of spontaneously bent, minimal energy configurations, representing a shell with zero stiffness to twisting.

The rest of the paper is organised as follows.
In Section \ref{sec_2}, the 3D elasticity models are presented and 
a discussion of the kinematic compatibility
of the 3D spontaneous strains is provided.
Then, in Section \ref{sec_3}, we present the theoretical basis for our dimension reduction procedure, and the derivation of 
the formulas allowing to deduce the target curvature $\overline A$ and the constant $\bar e$ from  3D elasticity.
This is the content of Theorems \ref{main_thm_SB}, \ref{main_thm_T}
and of  formulas \eqref{phys_stat_splay} and \eqref{phys_stat_twist}. 
As already mentioned, 
we work in the framework of the dimension reduction
approach which traces back to the seminal paper \cite{F_J_M_2002}.
In particular, to obtain our results we use the plate theory
for stressed heterogeneous multilayers developed by Schmidt 
in \cite{Schmidt2007}, 
which has recently motivated some new computational schemes
\cite{Bartels}.
Our models are valid for arbitrarily large elastic deformations. 
A plate model covering the regime of small
deformations has been presented in \cite{He}.

Section \ref{sec_4} is devoted to the physical
interpretation of our results:
We derive
explicit formulas for the deformations
realising the minimal free-energy of
the (reduced) plate models, which represent the configuration the nematic sheets exhibit in the absence of applied loads. 
We show that there is one spontaneously bent configuration in the splay bend case, while there are two distinct ones in the twist case. Thus, twist nematic plates are bistable structures, a fact  that has gone unnoticed until now, and has not yet  been  observed in the laboratory.
Moreover, the behaviour of splay-bend
and twisted nematic elastomer sheets is compared to
the case in which the nematic director field is constant (perpendicular to the mid-surface), 
and the thickness-dependence of the 
spontaneous strain is induced by the variation 
of the degree of 
nematic order along the thickness. 
Although a system like this has not yet been synthesised in the laboratory, we hope that the predictions of our model will motivate researchers to  investigate experimentally
the mechanical response it would produce.
In fact, our prediction is that, in the thin film limit, this texture should produce a plate with soft response to twisting, see Figure \ref{fig:sharon}.  


\section{Splay-bend and twisted nematic elastomers thin sheets}
\label{sec_2}


In this section, we present a three-dimensional model
for a thin sheet of nematic elastomer with splay-bend
and twisted distribution of the director along the thickness.
The kinematic compatibility of the corresponding spontaneous
strains is discussed in Subsection \ref{sec:kin_com},
where the case of strains distributed quadratically 
along the thickness is analyzed as well. 

\subsection{A three-dimensional model}
\label{subsec:three_dim_model}

We consider a thin sheet of nematic elastomer occupying the
reference configuration 
\begin{equation}
\label{nostra_ref_con}
\Om_h=\omega\times(-h/2,h/2), 
\end{equation}
for some $h>0$ small, 
where $\omega$ is a bounded Lipschitz domain of $\R^2$
with sufficiently regular boundary.

\begin{notation}
\upshape
Throughout the paper we will denote by
$\{\mathsf e_1,\mathsf e_2,\mathsf e_3\}$ 
the canonical basis of $\R^3$ and
by $z=(z_1,z_2,z_3)$ 
an arbitrary point in the physical reference 
configuration $\Omega_h$.
The term ``physical'' here and throughout the paper
is used in contrast to the corresponding rescaled quantities
we will introduce later on.
Also, $\SO(3)$ is the set of the $3{\times3}$ 
rotations and 
${\rm I}\in\SO(3)$ the identity matrix,
whereas the symbol ${\rm I}_2$ denotes the identity matrix 
of $\R^{2\times2}$.
\end{notation}

We suppose the sheet to be heterogeneous along the thickness
with associated stored energy density
\[
w^h:(-h/2,h/2)\times\RRR\longrightarrow[0,+\infty].
\] 
More precisely, in the two models we are going to consider,
the $z_3$-dependence of the energy density
is induced via the $z_3$-dependence of
the spontaneous strain distribution. 

If $n\in\R^3$ is a unit vector representing 
the local order of the nematic director, 
the (local) response of the nematic elastomer is 
encoded by a volume preserving spontaneous strain 
(technically, a right Cauchy-Green strain tensor)
given by
\begin{equation}
\label{nem_tens}
L(n)=a^{\frac23}n\otimes n+a^{-\frac13}
({\rm I}-n\otimes n),
\end{equation} 
for some material parameter $a>1$,
which is usually temperature-dependent.
Suppose that the nematic director $n$ varies along the thickness 
according to a given function $z_3\mapsto n^h(z_3)$
and coincides with two given constant directions 
at the top and at the bottom of the sheet:
\begin{equation*}
n^h(-h/2)=n_{\rm b},\qquad\qquad
n^h(h/2)=n_{\rm t},
\qquad\qquad\qquad
\mbox{for every small}\quad h>0,
\end{equation*}
for fixed $n_{\rm b},n_{\rm t}\in\Sph^2$.
The through-the-thickness variation of the nematic director 
translates into a 
variation of the corresponding spontaneous strain
according to \eqref{nem_tens}, namely,
\begin{equation}
\label{bar_c_h}
\bar c_h(z_3)
\,:=\,
L(n^h(z_3))
\,=\,
a_h^{2/3}n^h(z_3)\otimes n^h(z_3)
+a_h^{-1/3}\big({\rm I}-n^h(z_3)\otimes n^h(z_3)\big).
\end{equation}
Notice that, in this expression, we allow the material 
parameter $a$ to be $h$-dependent.
More precisely, from now on we will assume that  
\begin{equation}
\label{a_h}
a_h=1+\alpha_0\frac h{h_0},
\end{equation}
where $\alpha_0$ is a positive dimensionless parameter, while $h_0$
and $h$ have the physical dimension of length.
This assumption is easily understandable
if one thinks that curvature is related to the ratio
between the magnitude of the strain difference along the thickness
and the thickness itself. 
Hence, the linear scaling in $h$ in \eqref{a_h} is needed
in order to obtain finite curvature in the limit $h\to0$.
Observe that $\bar c_h(z_3)$ is positive definite
for every $z_3\in(-h/2,h/2)$ and every $h>0$ sufficiently small.

In the framework of finite elasticity, 
a prototypical energy density 
$w^h:(-h/2,h/2){\times}\RRR\to[0,\infty]$
modelling a nematic elastomer is 
\begin{equation}
\label{prot_energy}
w^h(z_3,F):=
\left\{
\begin{array}{ll}
\displaystyle
\frac{\mu}2\Big[(F^TF)\cdot\bar c_h^{-1}(z_3)-3-2\log(\det F)\Big]
+W_{vol}(\det F) & \quad\mbox{if }\det F>0,\\
+\infty & \quad\mbox{if }\det F\leq0,
\end{array}
\right.
\end{equation}
where $\mu>0$ is a material constant 
(shear modulus) and the function
$W_{vol}:(0,\infty)\to[0,\infty)$ is ${\rm C}^2$ around $1$
and fulfills the conditions:
\begin{equation*}
W_{vol}(t)=0 \iff t=1,
\qquad
W_{vol}(t)\longrightarrow\infty\ \,{\rm as}\ \,t\to 0^+,
\qquad
W_{vol}''(1) >0.
\end{equation*}
It is easy to show (see Remark \ref{rmk_W_0}, \eqref{W_h_SB}, and \eqref{W_h_T})
that $w^h$ is indeed nonnegative and such that
\begin{equation*}
w^h(z_3,F)=0\qquad\qquad\mbox{iff}
\qquad\qquad
F\in{\rm SO}(3)\sqrt{\bar c_h(z_3)}.
\end{equation*}
Expression \eqref{prot_energy} is
a natural generalization, see \cite{Ag_DeS_1}, 
of the classical trace formula for nematic elastomers 
derived by Bladon, Terentjev and Warner \cite{Bla_Ter_War},
in the spirit of Flory's work 
on polymer elasticity \cite{Flo}.
The presence of the purely volumetric term $W_{vol}(\det F)$
guarantees that the Taylor expansion at order two of the density 
results in isotropic elasticity with two independent 
elastic constants (shear modulus and bulk modulus).

If $\{\hat f_h\}_{h>0}$, with $\hat f_h:\Om_h\to\R^3$, 
represents a family of applied loads,
the (physical) stored elastic energy and total
energy of the system associated with
a deformation $v:\Om_h\to\R^3$ are given by
\begin{equation}
\label{phys_quan}
\hat{\mathscr E}^h(v)
\,=\,
\int\limits_{\Om_h}w^h(z_3,\na v(z))\,{\rm d}z,
\qquad\qquad
\hat{\mathscr F}^h(v)
\,=\,
\hat{\mathscr E}^h(v)
-
\int\limits_{\Om_h}\hat f_h\cdot v\,\dd z,
\end{equation}
respectively.

Let us now focus on the
nematic director field in the splay-bend and twisted cases,
which we denote by $n^h_{SB}$ and $n^h_T$, respectively.
We recall that these distributions are solutions to
the problem
\begin{equation*}
\min_{\begin{array}{r}n(-h/2)=n^b\\n(h/2)=n^t\end{array}}
\int\limits_{\omega_h}|\na n|^2{\rm d}z,
\end{equation*}
where in the 
splay-bend case $n^b=\mathsf e_1$ and $n^t=\mathsf e_3$, 
whereas in the 
twisted case $n^b=\mathsf e_1$ and $n^t=\mathsf e_2$.
We have
\begin{equation}
\label{director_splay}
n^h_{SB}(z_3)
\,=\,
\left(
\begin{array}{c}
\cos\big(\frac{\pi}4+\frac\pi2\frac{z_3}h\big)\\
0\\
\sin\big(\frac{\pi}4+\frac\pi2\frac{z_3}h\big)
\end{array}
\right),
\qquad
n^h_T(z_3)
\,=\,
\left(
\begin{array}{c}
\cos\big(\frac{\pi}4+\frac\pi2\frac{z_3}h\big)\\
\sin\big(\frac{\pi}4+\frac\pi2\frac{z_3}h\big)\\
0
\end{array}
\right),
\qquad\quad
z_3\in(-h/2,h/2),
\end{equation}
and we refer the reader to Figure 
\ref{fig:splay_twist}
for a sketch of these
two geometries.

We define the (physical) spontaneous strain distributions
$\bar c_{h,SB}$ and $\bar c_{h,T}$ as that in
\eqref{bar_c_h} with $n^h_{SB}$ and $n^h_T$
in place of $n^h$, respectively.
Correspondingly, we denote by
$w^h_{SB}$ and $w^h_T$ the stored energy densities,
by $\hat{\mathscr E}^h_{SB}$ and $\hat{\mathscr E}^h_T$ 
the stored energy functional,
and by $\hat{\mathscr F}^h_{SB}$ and $\hat{\mathscr F}^h_T$ 
the total energies.

\begin{figure}[htbp]
\begin{center}
\includegraphics[width=13cm]{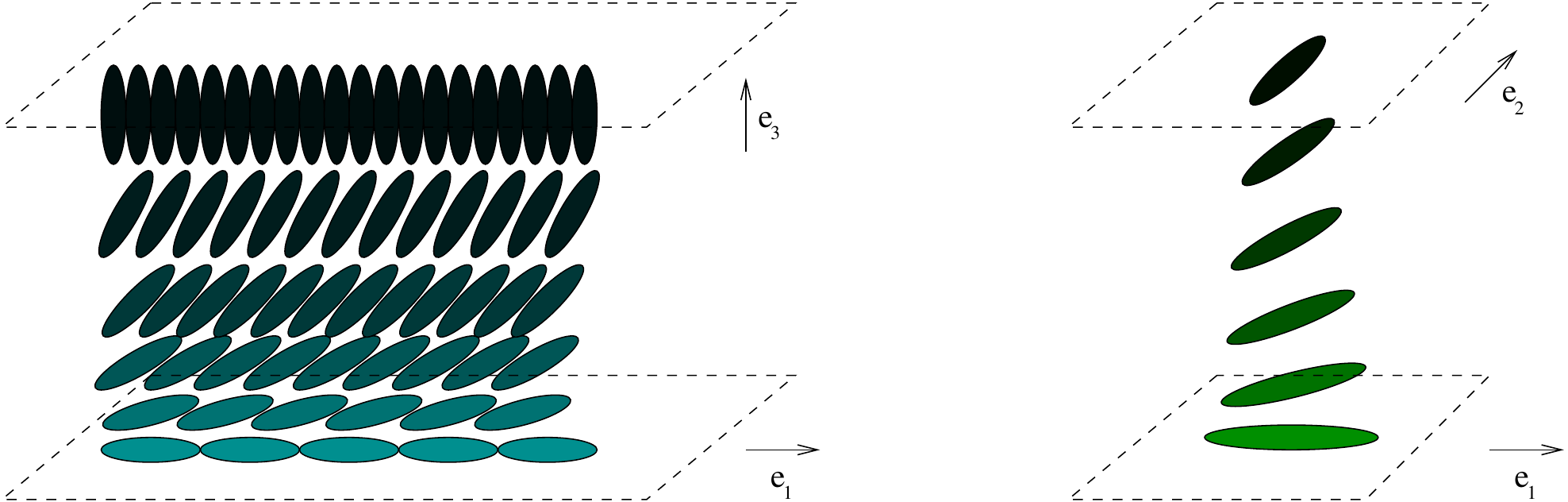}
\end{center}
\caption{
Sketch of the splay-bend director field (left)
and of the twisted director field (right).
} 
\label{fig:splay_twist}
\end{figure}


\subsection{Kinematic compatibility}
\label{sec:kin_com}


Here, we want to discuss the 
\emph{kinematic compatibility} 
of some given field of (physical) spontaneous strains. 
Let $\mathcal O$ be the (physical) reference configuration
of a given system and suppose
that it is a simply connected open subset of $\R^3$. 
We say that a smooth map
$\Ss:\mathcal O\longrightarrow\R^{3{\times}3}$,
representing a distribution of spontaneous strains and 
such that $\Ss(z)\in{\rm Psym}(3)$ for every $z\in\mathcal O$,
is \emph{kinematically compatible} if there exists
a smooth function $v:\mathcal O\longrightarrow\R^3$, 
representing a deformation
and such that $\det\na v(z)>0$ for every $z\in\mathcal O$,
satisfying 
\begin{equation}
\label{kin_com}
\na v^T\na v=\Ss\qquad\mbox{in}\quad\mathcal O.
\end{equation}
Following \cite{Ciarlet_book}, we reformulate this concept
in the framework of Riemannian geometry. 
In order to do this, let us denote by $g_{\R^3}$ the
Euclidean metric of $\R^3$ and recall that
for a given immersion $\varphi:\mathcal O\longrightarrow(\R^3,g_{\R^3})$
the pull-back metric of $g^{\R^3}_{|\varphi(\mathcal O)}$ via
$\varphi$ is the metric $h$ defined in $\mathcal O$ by the identity
\begin{equation*}
h_{|p}(X,Y)
\,=\,
g^{\R^3}_{|\varphi(p)}
\left(\,{\rm d}\varphi_{|p}[X],{\rm d}\varphi_{|p}[Y]\,\right),
\qquad\qquad
\mbox{for every }
\quad
X,Y\in{\rm T}_p\mathcal O=\R^{3{\times}3}.
\end{equation*}
The pull-back metric of $g$ via $\varphi$ is usually denoted by
$\varphi^*\,g_{|\varphi(\mathcal O)}$.
If $(z^i)_{i=1,2,3}$ and $(x^{\alpha})_{\alpha=1,2,3}$
are systems of coordinates for $\mathcal O$ and $\varphi(\mathcal O)$,
respectively, 
the above identity specialized to
$X=\pa/\pa z^i_{|p}$ and $Y=\pa/\pa z^j_{|p}$ gives
\begin{equation*}
{h_{ij}}_{|p}
\,=\,
{g^{\R^3}_{\alpha\beta}}_{|\varphi(p)}
{\frac{\pa\,\varphi^{\alpha}}{\pa z^i}}_{|p}
\frac{\pa\,\varphi^{\beta}}{\pa z^j}_{|p},
\end{equation*} 
where
$\varphi^{\alpha}:=x^{\alpha}\circ\varphi$.
If, in addition, we assume $(x^{\alpha})_{\alpha=1,2,3}$ to be
the standard Euclidean coordinates, 
then the coefficient ${g^{\R^3}_{\alpha\beta}}_{|\varphi(p)}$ 
is just $\delta_{\alpha\beta}$. 
Note that here and in what follows the Einstein summation convention for the sum over repeated indices is adopted.
We identify the given spontaneous strain distribution 
$\Ss$ with a metric defined in $\mathcal O$,
so that asking if there is an
orientation-preserving deformation 
$v:\mathcal O\longrightarrow\R^3$
such that \eqref{kin_com} holds true corresponds to seeking
for a local diffeomorphism
$v:\mathcal O\longrightarrow\R^3$
such that the pull-back metric of $g_{\R^3}$ via $v$ 
coincides with the metric $\Ss$. In formulas,
\begin{equation*}
{\Ss_{ij}}_{|p}
\,=\,
\delta_{\alpha\beta}
{\frac{\pa\,v^{\alpha}}{\pa z^i}}_{|p}
\frac{\pa\,v^{\beta}}{\pa z^j}_{|p},
\end{equation*} 
where we have fixed standard Euclidean coordinates in the target 
manifold $v(\mathcal O)$.
Note that since $v$ is a local differmorphism, 
the equivalence
\begin{equation}
\label{kin_com_2}
v^*g^{\R^3}_{|v(\mathcal O)}=\Ss\qquad\mbox{in}\qquad\mathcal O
\end{equation}
establishes a local isometry (through $v$) between
the manifolds
$\big(\mathcal O,\Ss\big)$ and 
$\Big(v(\mathcal O),g^{\R^3}_{|v(\mathcal O)}\Big)$.
Now, we have from Theorem 1.5-1 and Theorem 1.6-1 
in \cite{Ciarlet_book} that,
since $\mathcal O$ is simply connected,
a necessary and sufficient condition for \eqref{kin_com_2} to hold
is that 
\begin{equation}
\label{kin_com_3}
{\rm Riem}_\Ss\equiv0\qquad\mbox{in}\qquad\mathcal O,
\end{equation}
where ${\rm Riem}_\Ss$ is the fourth-order Riemann curvature tensor
associated with the metric $\Ss$.
We recall that, in the given local chart $(z^i)_{i=1,2,3}$
of $\mathcal O$, the $(3,1)$--coefficients ${\rm R}_{ijk}^l$'s of
${\rm Riem}_{\Ss}
=
{\rm R}_{ijk}^l
\big(
{\rm d}z^i\otimes{\rm d}z^j\otimes{\rm d}z^k
\otimes\partial/\partial z^l
\big)$ are given by
\begin{equation*}
{\rm R}_{ijk}^l
\,:=\,
\frac{\partial}{\partial z^j}
\Gamma_{ik}^l
-\frac{\partial}{\partial z^k}
\Gamma_{ij}^l
+\Gamma_{js}^l\Gamma_{ik}^s
-\Gamma_{ks}^l\Gamma_{ij}^s,
\end{equation*}
where the Christoffel's symbols $\Gamma_{ij}^k$'s are defined as
\begin{equation}
\label{K_symbols}
\Gamma_{ij}^k
\,:=\,
\Ss^{kl}\,\Gamma_{ijl},
\qquad\qquad
\Gamma_{ijl}
\,:=\,
\frac12\Big(\partial_i\Ss_{jl}
+\partial_j\Ss_{il}
-\partial_l\Ss_{ij}
\Big),
\end{equation}
and the symbols $\Ss^{ij}$'s stand for 
the components of the inverse $\Ss^{-1}$ of $\Ss$.
To simplify the computations it is sometimes useful to introduce
the $(4,0)$--coefficients ${\rm R}_{lijk}$'s of ${\rm Riem}_L$,
defined as
\begin{equation*}
{\rm R}_{lijk}
\,:=\,
\Ss_{ls}\,{\rm R}_{ijk}^s
\,=\,
\partial_j\Gamma_{ikl}
-
\partial_k\Gamma_{ijl}
+
\Gamma_{ij}^p\Gamma_{klp}
+
\Gamma_{ik}^p\Gamma_{jlp}.
\end{equation*}
It is clear that ${\rm Riem}_\Ss\equiv0$ if and only if
${\rm R}_{lijk}\equiv0$ for every $l,i,j,k\in\{1,2,3\}$.
Finally, let us recall that, since we are in dimension $3$, 
condition \eqref{kin_com_3} is equivalent to 
${\rm Ric}_{\Ss}\equiv 0$ in $\mathcal O$,
where ${\rm Ric}_{\Ss}$ denotes the second-order 
Ricci curvature tensor associated with $\Ss$, which is defined as
${\rm Ric}_{\Ss}={\rm R}_{ij}\,{\rm d}z^i\otimes{\rm d}z^j$, with
\begin{equation*}
{\rm R}_{ij}\,:=\,\partial_l\Gamma_{ij}^l-\partial j\Gamma_{il}^l
+\Gamma_{lk}^l\Gamma_{ij}^k-\Gamma_{jk}^l\Gamma_{il}^k.
\end{equation*}
From now on in this section, we restrict our attention to the case
where $\mathcal O=\Omega_h$ 
(see \eqref{nostra_ref_con}) and 
the spontaneous strain distribution $\Ss$ is a function of the 
thickness variable $z_3\in(-h/2,h/2)$.
Note that a material point of $\Omega_h$, 
normally referred to as a point of components $(z_1,z_2,z_3)$
throughout the paper, 
is a point of the manifold $\Omega_h$ with coordinates $(z^1,z^2,z^3)$
from the point of view of Riemannian geometry.
In the following subsections, we discuss the 
kinematic compatibility of $z_3\mapsto\Ss(z_3)$ in three cases: 
the case where $\Ss(z_3)$ depends quadratically on $z_3$ and two cases
(splay-bend and twisted nematic elastomer sheets)
where the dependence of $\Ss(z_3)$ on $z_3$ is more complicated
and gives rise to incompatible strains. 
Throughout this section we use the variable $t$ in place
of $z_3$ and we use the index/apex ``$t$'' in place of ``$3$''.


\subsubsection{The splay-bend case} 


In this case, setting 
\begin{equation}
\label{f_h}
f_h(t):=\frac{\pi}4+\frac{\pi}{2h}t,
\qquad\qquad t\in(-h/2,h/2),
\end{equation}
and looking at \eqref{bar_c_h} and \eqref{director_splay},
we have that, up to a multiplicative constant,
the spontaneous strain distribution is given by
\begin{align*}
\Ss =\Ss(t)&=
{\rm I}
+(a_h-1)n^h_{SB}\otimes n^h_{SB}
=
\left(
\begin{array}{ccc}
1+(a_h-1)\cos^2f_h
& 0 & 
\left(\frac{a_h-1}2\right)\sin(2f_h)\\
0 & 1 & 0\\
\left(\frac{a_h-1}2\right)\sin(2f_h)
& 0 &
1+(a_h-1)\cos^2f_h
\end{array}
\right),\\
\Ss^{-1}(t)&=
{\rm I}+\left(\frac1{a_h}-1\right)
n^h_{SB}\otimes n^h_{SB}
=
\left(
\begin{array}{ccc}
1+\left(\frac1{a_h}-1\right)\cos^2f_h
& 0 & 
\left(\frac{1-a_h}{2a_h}\right)\sin(2f_h)\\
0 & 1 & 0\\
\left(\frac{1-a_h}{2a_h}\right)\sin(2f_h)
& 0 &
1+\left(\frac1{a_h}-1\right)\cos^2f_h
\end{array}
\right).
\end{align*}
It turns out that the coefficient ${\rm R}_{1t}$
of ${\rm Ric}_{\Ss}$ has the quite simple expression
\begin{equation}
R_{1t}
\,:=\,
\partial_l\Gamma_{1t}^l-\partial_t\Gamma_{1l}^l
+\Gamma_{lk}^l\Gamma_{1t}^k+\Gamma_{tk}^l\Gamma_{1l}^kì
\,=\,
-\partial_t(\Gamma_{11}^1+\Gamma_{12}^2)
+\Gamma_{lk}^l\Gamma_{1t}^k+\Gamma_{tk}^l\Gamma_{1l}^k
\,=\,
-\partial_t\Gamma_{11}^1
+\Gamma_{1t}^1\Gamma_{1t}^t+\Gamma_{tt}^1\Gamma_{11}^t,
\label{R_1t}
\end{equation}
where we have first used the fact that
the Christoffel symbols depend only on $t$ and 
secondly the property
\begin{equation*}
\Gamma_{ij}^k=0\qquad\mbox{whenever}\qquad
2\in\{i,j,k\}.
\end{equation*}
This can be easily checked using the definition
of $\Gamma_{ij}^k$ in \eqref{K_symbols}.
The same definition and simple computations also give
\begin{align*}
\Gamma_{11}^1
&\,=\,
-\Gamma_{1t}^t
\,=\,
-\frac{(a-1)^2}a\left(\frac{\pi}{2h}\right)
\sin^2f_h\cos^2f_h,\\
\Gamma_{1t}^1
&\,=\,
-(a-1)\left(\frac{\pi}{2h}\right)
\sin f_h\cos f_h
\left[1+\left(\frac 1a-1\right)\cos^2f_h\right],\\
\Gamma_{tt}^1
&\,=\,
(a-1)\left(\frac{\pi}{2h}\right)
(\cos^2f_h-\sin^2f_h)
\left[1+\left(\!\frac 1a-1\!\right)\sin^2f_h\right]
-
\frac{(a-1)^2}a\left(\frac{\pi}{2h}\right)
\sin^2f_h\cos^2f_h,\\
\Gamma_{11}^t
&\,=\,
(a-1)\left(\frac{\pi}{2h}\right)
\sin f_h\cos f_h
\left[1+\left(\!\frac 1a-1\!\right)\sin^2f_h\right]\!.
\end{align*}
Plugging these expressions into \eqref{R_1t} yields
\begin{equation*}
{\rm R}_{1t}
=
-\frac{(a-1)^2}a\left(\frac{\pi}{2h}\right)
\sin f_h\cos f_h
(\cos^2f_h-\sin^2f_h)
=
-\frac{(a-1)^2}a\left(\frac{\pi}{8h}\right)
\sin\left(\pi+\frac{2\pi}ht\right).
\end{equation*}
Thus, we can conclude that
${\rm R}_{1t}$ is not identically zero in $\Om_h$.
In turn, ${\rm Ric}_{\Ss}$ is not
identically zero, so that the splay-bend spontaneous strain
distribution is not kinematically compatible.
 

\subsubsection{The twisted case} 


In this case, 
following the same notation as in \eqref{f_h},
we have
\begin{align*}
\Ss=\Ss(t)&=
{\rm I}
+(a_h-1)n^h_T\otimes n^h_T
=
\left(
\begin{array}{ccc}
1+(a_h-1)\cos^2f_h
& 
\left(\frac{a_h-1}2\right)\sin(2f_h) & 0\\
\left(\frac{a_h-1}2\right)\sin(2f_h) 
&
1+(a_h-1)\cos^2f_h & 0\\
0 & 0 & 1\\
\end{array}
\right),\\
\Ss^{-1}(t)&=
{\rm I}
+\left(\frac1{a_h}-1\right)
n^h_T\otimes n^h_T
=
\left(
\begin{array}{ccc}
1+\left(\frac1{a_h}-1\right)\cos^2f_h 
& 
\left(\frac{1-a_h}{2a_h}\right)\sin(2f_h) & 0\\
\left(\frac{1-a_h}{2a_h}\right)\sin(2f_h)
&
1+\left(\frac1{a_h}-1\right)\cos^2f_h & 0\\
0 & 0 & 1
\end{array}
\right).
\end{align*}
For the twisted geometry, the coefficient 
${\rm R}_{tt}$ of ${\rm Ric}_{\Ss}$ has a simple expression.
Indeed, we have
\begin{equation}
{\rm R}_{tt}
\,:=\,
\partial_l\Gamma_{tt}^l-\partial_t\Gamma_{tl}^l
+\Gamma_{lk}^l\Gamma_{tt}^k+\Gamma_{tk}^l\Gamma_{tl}^k
\,=\,
-\partial_t(\Gamma_{1t}^1+\Gamma_{2t}^2)
-
\big(\Gamma_{tk}^1\Gamma_{1t}^k+\Gamma_{tk}^2\Gamma_{2t}^k\big)
\,=\,
-\Big[
(\Gamma_{1t}^1)^2+2\,\Gamma_{1t}^2\Gamma_{2t}^1
+(\Gamma_{2t}^2)^2
\Big],
\label{R_tt}
\end{equation}
since
\begin{equation}
\label{conti}
\Gamma_{tt}^k=0\quad\mbox{for every}\quad k=1,2,t,
\qquad\mbox{and}\qquad
\Gamma_{1t}^1\,=\,-\Gamma_{2t}^2
\,=\,
-\left(\frac{a^2-1}{2a}\right)\left(\frac{\pi}{2h}\right)
\sin f_h\cos f_h.
\end{equation}
This can be easily checked from the definition of the Christoffel 
symbols in \eqref{K_symbols}. Similar computations
yield
\begin{equation*}
\Gamma_{1t}^2
\,=\,
\left(\frac{a-1}2\right)\left(\frac{\pi}{2h}\right)
\left(\cos^2f_h-\frac{\sin^2f_h}a\right),
\qquad
\Gamma_{2t}^1
\,=\,
\left(\frac{a-1}2\right)\left(\frac{\pi}{2h}\right)
\left(\frac{\cos^2f_h}a-\sin^2f_h\right).
\end{equation*}
Using these formulas together with 
\eqref{R_tt} and the second equation in \eqref{conti} gives
\begin{equation*}
{\rm R}_{tt}
\,=\,
-\frac{(a-1)^2}{2a}\left(\frac{\pi}{2h}\right)^2.
\end{equation*} 
This fact implies, in particular, that
${\rm Ric}_{\Ss}$ is not
identically zero and in turn that the twisted 
spontaneous strain distribution is not kinematically compatible.


\subsubsection{The quadratic case} 
\label{comp_quadrat}


In this subsection, we consider the case where
\begin{equation}
\label{SS_quadratic}
\Ss\,=\,\Ss(t)\,=\,
{\rm I}
+t\,A+t^2\,B,\qquad\qquad t\in(-h/2,h/2),
\end{equation}
for some diagonal matrices $A={\rm diag}(A_{11},A_{22},A_{tt})$
and $B={\rm diag}(B_{11},B_{22},B_{tt})$.
Note that $\Ss(t)=:{\rm diag}(\Ss_{11}(t),\Ss_{22}(t),\Ss_{tt}(t))$
is positive definite for every $t$ sufficiently small.
Elementary computations yield
\begin{align*}
{\rm R}_{11}
&=
-\frac14\left(\frac{\Ss_{11}}{\Ss_{tt}}\right)
\left\{
2\frac{\rm d}{{\rm d}t}\left(\frac{\dot\Ss_{11}}{\Ss_{11}}\right)
+
\left(\frac{\dot\Ss_{11}}{\Ss_{11}}\right)
\left[
\frac{\dot\Ss_{11}}{\Ss_{11}}
+
\frac{\dot\Ss_{22}}{\Ss_{22}}
-
\frac{\dot\Ss_{tt}}{\Ss_{tt}}
\right]
\right\},\\
{\rm R}_{22}
&=
-\frac14\left(\frac{\Ss_{22}}{\Ss_{tt}}\right)
\left\{
2\frac{\rm d}{{\rm d}t}\left(\frac{\dot\Ss_{22}}{\Ss_{22}}\right)
+
\left(\frac{\dot\Ss_{22}}{\Ss_{22}}\right)
\left[
\frac{\dot\Ss_{11}}{\Ss_{11}}
+
\frac{\dot\Ss_{22}}{\Ss_{22}}
-
\frac{\dot\Ss_{tt}}{\Ss_{tt}}
\right]
\right\},\\
{\rm R}_{tt}
&=
-\frac14
\left\{
2\frac{\rm d}{{\rm d}t}
\left(
\frac{\dot\Ss_{11}}{\Ss_{11}}+\frac{\dot\Ss_{22}}{\Ss_{22}}\right)
-
\left(\frac{\dot\Ss_{tt}}{\Ss_{tt}}\right)
\left(
\frac{\dot\Ss_{11}}{\Ss_{11}}
+
\frac{\dot\Ss_{22}}{\Ss_{22}}
\right)
+
\left(
\frac{\dot\Ss_{11}}{\Ss_{11}}
\right)^2
+
\left(
\frac{\dot\Ss_{22}}{\Ss_{22}}
\right)^2
\right\},\\
{\rm R}_{ij}
&=0,
\qquad\mbox{for every }i\neq j,
\end{align*}
where
$\dot\Ss_{ii}$ is the derivative of $\Ss_{ii}$ with respect to $t$.
Now, set
\begin{equation*}
\xi:=\log\Ss_{11},\qquad\quad
\eta:=\log\Ss_{22},\qquad\quad
\tau:=\log\Ss_{tt},
\end{equation*}
so that
$\dot\Ss_{11}/\Ss_{11}=\dot\xi$,
$\dot\Ss_{22}/\Ss_{22}=\dot\eta$,
$\dot\Ss_{tt}/\Ss_{tt}=\dot\tau$,
and in turn
\begin{align*}
{\rm R}_{11}
&\,=\,
-\frac14\left(\frac{{\rm e}^\xi}{{\rm e}^\tau}\right)
\left[
2\,\ddot\xi
+
\dot\xi
\left(
\dot\xi+\dot\eta-\dot\tau
\right)
\right],
\qquad\qquad
{\rm R}_{22}
\,=\,
-\frac14\left(\frac{{\rm e}^\eta}{{\rm e}^\tau}\right)
\left[
2\,\ddot\eta
+
\dot\eta
\left(
\dot\xi+\dot\eta-\dot\tau
\right)
\right],\\
{\rm R}_{tt}
&\,=\,
-\frac14\left[2\,(\ddot\xi+\ddot\eta)
-\dot\tau\,(\dot\xi+\dot\eta)
+
(\dot\xi)^2+(\dot\eta)^2
\right].
\end{align*}

The condition ${\rm Ric}_\Ss\equiv0$, which guarantees the kinematic
compatibility of $t\mapsto\Ss(t)$ as discussed above, 
is then equivalent to the following system of ODEs:
\begin{equation*}
\begin{cases}
&2\,\ddot\xi
+
\dot\xi
\left(
\dot\xi+\dot\eta-\dot\tau
\right)=0,\\
&2\,\ddot\eta
+
\dot\eta
\left(
\dot\xi+\dot\eta-\dot\tau
\right)=0,\\
&2\,(\ddot\xi+\ddot\eta)
-\dot\tau\,(\dot\xi+\dot\eta)
+
(\dot\xi)^2+(\dot\eta)^2=0.
\end{cases}
\end{equation*}
Solving this system translates into compatibility 
conditions on $A$ and $B$ in \eqref{SS_quadratic}. 
It then turns out that a spontaneous strain distribution 
$t\mapsto\Ss(t)$ of the form \eqref{SS_quadratic} 
is kinematically compatible
if and only if one of the following four conditions is satisfied:
\begin{align*}
(i)\quad&
A_{11}=A_{22}=A_{tt}=0\quad\mbox{and}\quad B_{11}=B_{22}=B_{tt}=0,\\
(ii)\quad& 
A_{22}=A_{tt}=0,\quad B_{22}=B_{tt}=0,\quad
\mbox{and}\quad B_{11}=A_{11}^2/4\neq0,\\
(iii)\quad& 
A_{11}=A_{tt}=0,\quad B_{11}=B_{tt}=0,\quad
\mbox{and}\quad B_{22}=A_{22}^2/4\neq0,\\
(iv)\quad& 
A_{11}=A_{22}=0,\quad B_{11}=B_{22}=0,\quad
\mbox{and}\quad A_{tt}^2+B_{tt}^2\neq0.
\end{align*}
Note that the first condition corresponds to the trivial case
$\Ss={\rm I}$ 
and the second one tells us in particular that
a strain of the form
\begin{equation}
\label{SS_specifico}
\Ss(t)
:=
\left(
\begin{array}{ccc}
1-2\,k\,t+k^2\,t^2 & 0 & 0\\
0 & 1 & 0\\
0 & 0 & 1
\end{array}
\right),
\qquad\qquad t\in(-h/2,h/2),
\end{equation}
for some constant $k\in\R\setminus\{0\}$, is kinematically compatible.
A prototypical deformation $v$ giving rise to such $\Ss$ can be
provided in the following way.
Let $I$ be an open interval of $\R$,
let $\gamma:I\longrightarrow\R^3$ be a smooth curve,
and define
\begin{equation*}
T(s):=\gamma'(s),
\qquad\qquad
N(s):=\frac{T'(s)}{|T'(s)|},
\qquad\qquad
B(s):=T(s)\wedge N(s),
\end{equation*}
for every $s\in I$,
where the apex stands for differentiation with respect to $s$. 
Suppose that the curve is parameterized by arc length, so that
$|T|=1$ and the curvature $k$ is defied as $k:=|T'|$.
Then the Frenet--Serret formulas read
\begin{equation*}
\begin{cases}
T'
&=
\ k\,N,\\
N'
&=
\ -k\,T+\tau\,B,\\
B'
&=
\ -\tau\,N.
\end{cases}
\end{equation*}
Note that multiplying the first equation by $N$
gives $k=T'\cdot N=-N'\cdot T$.
Let us restrict to the case of $B$ being 
constantly equal to $\mathsf e_2$,
where $\{\mathsf e_1,\mathsf e_2, \mathsf e_2\}$ is the
canonical basis of $\R^3$.
This means that $\gamma$ is a planar curve
and the above formulas imply in particular that
$\tau=-B'\cdot N=0$ and $|N'|^2=k^2$.
Now, let us define
$v:\Omega_h\longrightarrow\R^3$ as
\begin{equation*}
v(s,z_2,t)
\,:=\,
\gamma(s)+t\,N(s)+z_2\,\mathsf e_2,
\end{equation*}
where we have supposed $\omega_h=\omega\times(-h/2,h/2)$
with $\omega=I\times J$, 
for some open intervals $I$, $J\subset\R$.
Then
$\na v=(T+t\,N'\,|\,\mathsf e_2\,|\, N)$ and therefore
\begin{equation*}
\na v^T\na v
=
\left(
\begin{array}{ccc}
|T|^2+2\,t\,T{\cdot}N'+t^2|N'|^2 
& 
\mathsf e_2{\cdot}(T+t\,N')  
& 
N{\cdot}(T+t\,N')\\
\mathsf e_2{\cdot}(T+t\,N')
& 
|\mathsf e_2|^2 
& 
\mathsf e_2\cdot N\\
N{\cdot}(T+t\,N')
&
\mathsf e_2\cdot N
&
|N|^2
\end{array}
\right)
=
\left(
\begin{array}{ccc}
1-2\,k\,t+k^2\,t^2 & 0 & 0\\
0 & 1 & 0\\
0 & 0 & 1
\end{array} 
\right).
\end{equation*}
Supposing the curvature $k$ to be constant, we have thus derived
a strain $\Ss$ of the form \eqref{SS_specifico}.

Finally, note that the analysis performed in this session
shows that in the linear case where $\Ss(t)$ is of the form
\begin{equation*}
\Ss\,=\,\Ss(t)\,=\,
{\rm I}
+t\,A,\qquad\qquad t\in(-h/2,h/2),
\end{equation*}
for some diagonal matrix $A\neq0$, 
the kinematic compatibility of the spontaneous strain
distribution is never fulfilled.


\section{Derivation of the plate model}
\label{sec_3}


In this section, 
we first rewrite the three-dimensional model previously introduced
in a rescaled reference configuration. 
Then, in Subsection \ref{rigorous_res}, 
we recall two rigorous dimension reduction results
of compactness and $\Gamma$-convergence. 
This mathematical technique is subsequently employed
in Subsection \ref{nostra_derivazione}, 
where our main results, 
Theorems \ref{main_thm_SB} and \ref{main_thm_T},
are stated and proved.


\subsection{The rescaled three-dimensional model}
\label{3D_rescaled}


As it is standard for dimension reduction techniques, 
let us now operate a change of variables in order to
rewrite the energies in a fixed, $h$-independent 
rescaled reference configuration.

\begin{notation}
\upshape
We denote by $x=(x_1,x_2,x_3)=(x',x_3)$ 
an arbitrary point in the rescaled reference configuration 
$\Om:=\omega\times(-1/2,1/2)$.
\end{notation}

For every $h>0$ small, we define the rescaled energy density
$W^h:(-1/2,1/2)\times\R^{3\times3}\longrightarrow[0,+\infty]$
and the rescaled applied loads
$f_h:\Om\longrightarrow\R^3$ as
\begin{equation}
\label{eq:resc_ener}
W^h(x_3,F)
\,:=\,
w^h(hx_3,F),
\qquad\qquad\quad
f_h(x)
\,:=\,
\hat f_h(x',hx_3).
\end{equation}
Note that $W^h$ fulfills
\begin{equation*}
W^h(x_3,F)=0\qquad\quad\mbox{iff}
\qquad\quad
F\in{\rm SO}(3)\sqrt{\overline C_h(z_3)},
\qquad\qquad
\overline C_h(x_3):=\bar c_h(hx_3).
\end{equation*}
Setting
\begin{equation}
\label{resc_grad}
\na_hy:=
\left(
\begin{array}{ccccc}
\!\!\partial_{x_1}y\!\! & \!\!\bigg|\!\! 
& \!\!\partial_{x_2}y\!\! & \!\!\bigg|\!\! 
& \!\!\displaystyle\frac{\partial_{x_3}y}h\!\!
\end{array}
\right)
=:
\left(
\begin{array}{ccccc}
\!\!\na'y\!\!
& \!\!\bigg|\!\! 
& \!\!\displaystyle\frac{\partial_{x_3}y}h\!\!
\end{array}
\right),
\qquad\qquad
\mbox{for every}\quad y:\Om\longrightarrow\R^3,
\end{equation}
the correspondence between the original quantities and the
rescaled ones is through the formulas
\begin{equation}
\label{correspondence}
\hat{\mathscr E}^h(v)
\,=\,
h\,\mathscr E^h(y),
\qquad\quad
\hat{\mathscr F}^h(v)
\,=\,
h\,\mathscr F^h(y),
\qquad\qquad
v(z):=y(z',z_3/h)
\quad\mbox{a.e.}\ z\ \mbox{in}\ \Omega_h.
\end{equation}
Here, the rescaled stored elastic energy functional $\mathscr E^h$ 
and the rescaled total energy functional $\mathscr F^h$
are defined, on a deformation 
$y:\Om\to\R^3$, 
as
\begin{equation}
\label{mathscr_E_h}
\mathscr E^h(y)
:=
\int\limits_{\Om}W^h(x_3,\na_h y(x))\,\dd x,
\qquad\qquad
\mathscr F^h(y)
\,:=\,
\mathscr E^h(y)
-
\int\limits_{\Om}f_h\cdot y\,\dd x.
\end{equation}
Following the notation already introduced in 
Section \ref{subsec:three_dim_model},
we use the indexes $SB$ and $T$ to denote the
quantities related to the splay-bend case and twisted case,
respectively. 
Hence, we write $\overline C_{h,SB}$, $W^h_{SB}$,
$\mathscr E^h_{SB}$, and $\mathscr F^h_{SB}$   
for the splay-bend model, 
and $\overline C_{h,T}$, $W^h_T$,
$\mathscr E^h_T$, and $\mathscr F^h_T$   
for the twisted model.

We now focus attention on the (rescaled)
spontaneous strains $\overline C_h(x_3)$.
Looking at \eqref{director_splay}, we first note that
for both models $n^h(hx_3)$ is independent of $h$, namely
\begin{equation}
\label{tensor_N}
N_{SB}(x_3)
:=
n^h_{SB}(hx_3)
=
\left(
\begin{array}{c}
\cos\big(\frac{\pi}4+\frac\pi2x_3\big)\\
0\\
\sin\big(\frac{\pi}4+\frac\pi2x_3\big)
\end{array}
\right),
\qquad\quad
N_T(x_3)
:=
n^h_T(hx_3)
=
\left(
\begin{array}{c}
\cos\big(\frac{\pi}4+\frac\pi2x_3\big)\\
\sin\big(\frac{\pi}4+\frac\pi2x_3\big)\\
0
\end{array}
\right),
\end{equation}
for every $x_3\in(-1/2,1/2)$.
Hence, referring to the (above) definition of $\overline C_h$
and to expression \eqref{bar_c_h}, 
we have for the splay-bend case as well as for the twisted case 
\begin{align}
\label{eq:resc_SS}
\overline C_h(x_3)
&
=a_h^{2/3}N(x_3)\otimes N(x_3)
+a_h^{-1/3}\big(
{\rm I}
-N(x_3)\otimes N(x_3)\big)\nonumber\\
&
=\big(a_h^{2/3}-a_h^{-1/3}\big)
\left(\frac{{\rm I}}{a_h-1}+N(x_3)\otimes N(x_3)\right)\nonumber\\
&
={\rm I}
+\frac{\alpha_0h}{h_0}\left(N(x_3)\otimes N(x_3)-
\frac{{\rm I}}3\right)
+R^h(x_3),
\end{align}
where 
$\|R^h\|_{\infty}=o(h)$
and $\|\cdot\|_{\infty}$ is the norm in the space
${\rm L}^{\infty}\big((-1/2,1/2),\R^{3\times3}\big)$.
Note that in the third equality we have
plugged in expression \eqref{a_h} for $a_h$
and used the expansion
\begin{equation*}
a_h^{2/3}-a_h^{-1/3}
=
\frac{\alpha_0h}{h_0}
-\frac13\left(\frac{\alpha_0h}{h_0}\right)^2
+o(h^2).
\end{equation*}


\subsection{A rigorous mathematical result for the limiting theory}
\label{rigorous_res}


For the convenience of the reader, we collect in this section,
in a slightly simplified version,
two results proved in \cite{Schmidt2007}
(Theorems \ref{compattezza} and \ref{thm_Schmidt_Gamma} below), 
which we are going to use later on.
In this paper, 
an arbitrary family of energy densities
$W^h:(-1/2,1/2)\times\R^{3\times3}\longrightarrow[0,+\infty]$
is considered, with the property that 
\begin{equation}
\label{cond_W_h_cruc}
W^h(x_3,F)
\,=\,
W_0\big(F(
{\rm I}
+hB^h(x_3))\big),
\end{equation}
where the function $W_0:\R^{3\times3}\longrightarrow[0,+\infty]$ 
satisfies Assumption \ref{assunzione_W_0}
below, and 
\begin{equation*}
B^h\longrightarrow B
\qquad\qquad 
\mbox{in}
\qquad 
{\rm L}^{\infty}((-1/2,1/2),\R^{3\times3}),
\qquad
\mbox{as}\quad h\downarrow 0.
\end{equation*}
For each small $h$, let us introduce the functional 
$\mathscr E^h:{\rm W}^{1,2}(\Omega,\R^3)\to[0,+\infty]$,
defined as 
\begin{equation*}
\mathscr E^h(y)
\;:=\,
\int\limits_{\Omega}W^h(x_3,\na_hy){\rm d}x,
\end{equation*}
with $\na_h$ given by \eqref{resc_grad}.
Recall that here and throughout the paper
$\Omega=\omega\times(-1/2,1/2)$ and $\omega\subset\R^2$
is a bounded Lipschitz domain with sufficiently regular boundary.
More precisely, for the following theorems
to hold, it is required that 
there exists a closed subset $\Sigma\subset\pa\omega$
with $\mathscr H^1(\Sigma)=0$ such that the outer 
unit normal exists and 
is continuous on $\pa\omega\setminus\Sigma$.

\begin{assumption}
\label{assunzione_W_0}
The function $W_0:\R^{3\times3}\longrightarrow[0,+\infty]$ 
fulfills the following conditions: 

\begin{itemize}

\smallskip
\item[(i)]
\ it is ${\rm C}^2$ in a neighborhood of $\SO(3)$,
and it is minimised at ${\rm I}$;

\smallskip
\item[(ii)]
\ it is frame-indifferent, i.e. 
$W_0(F)=W_0(RF)$ for every $R\in\SO(3)$.

\smallskip
\item[(iii)]
\ there exists a constant $C>0$ such that for every $F\in\RRR$, 
\begin{equation*}
W_0(F)
\,\geq\, 
C\,\dist^2\big(F,\SO(3)\big).
\end{equation*}

\end{itemize}

\end{assumption}

The following result states that a sequence $\{y_h\}$ which
bounds the energy $\mathscr E^h$ by a factor $h^2$
converges (up to subsequences) to a limit
that is constrained to the class of (${\rm W}^{2,2}$-)
isometric immersions of $\omega$ into the three-dimensional
Euclidean space, namely
\begin{equation}
\label{Aiso}
\mathcal A_{iso}
\,:=\,
\Big\{
y\in{\rm W}^{2,2}(\omega,\R^3):(\na'y)^T\na'y=
{\rm I}_2
\ \mbox\ {a.e.\ in}\quad\omega
\Big\}.
\end{equation}

\begin{theorem}[Compactness]
\label{compattezza}
If $\{y^h\}\subset{\rm W}^{1,2}(\Omega,\R^3)$
is a sequence such that
\begin{equation}
\label{hyp_comp}
\int\limits_{\Omega}W^h(x_3,\na_hy_h){\rm d}x\leq Ch^2
\end{equation}
for every $h>0$ small, then there exists a 
(not relabelled) subsequence such that
\begin{equation*}
\na_hy^h\longrightarrow
\left(
\begin{array}{ccccc}
\!\!\na'y\!\! 
& \!\!\big|\!\! 
& \!\!\nu\!\! 
\end{array}
\right),
\qquad\mbox{as}\quad 
h\downarrow0,
\quad
\text{strongly in}\quad{\rm L}^2(\Om,\RRR).
\end{equation*}
Moreover,
the function
$x\mapsto\left(
\begin{array}{ccc}
\!\!\na'y\!\! 
& \!\!\big|\!\! 
& \!\!\nu\!\! 
\end{array}
\right)$
belongs to ${\rm W}^{1,2}(\Omega,\R^{3\times3})$,
is independent of $x_3$, and\\
$\left(
\begin{array}{ccc}
\!\!\na'y\!\! 
& \!\!\big|\!\! 
& \!\!\nu\!\! 
\end{array}
\right)(x')\in\SO(3)$ for a.e. $x'\in\omega$.
\end{theorem}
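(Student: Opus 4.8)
The plan is to deduce this compactness statement from the rigidity estimate of Friesecke--James--M\"uller together with the coercivity hypothesis (iii) in Assumption \ref{assunzione_W_0}. First I would observe that by \eqref{cond_W_h_cruc} and the fact that $B^h \to B$ in $L^\infty$, the matrices ${\rm I} + hB^h(x_3)$ are uniformly close to the identity, so that ${\rm dist}(\na_h y_h, \SO(3))$ and ${\rm dist}(\na_h y_h ({\rm I} + hB^h), \SO(3))$ differ by $O(h)\|\na_h y_h\|$; combining this with (iii) applied to $W_0$ one gets
\begin{equation*}
\int_\Omega {\rm dist}^2(\na_h y_h, \SO(3))\, \dd x
\;\leq\; C \int_\Omega W^h(x_3, \na_h y_h)\, \dd x + C h^2 \|\na_h y_h\|_{L^2}^2
\;\leq\; C h^2 + Ch^2\|\na_h y_h\|_{L^2}^2,
\end{equation*}
and since the $L^2$-distance to the compact set $\SO(3)$ controls $\|\na_h y_h\|_{L^2}$ up to an additive constant, an absorption argument gives the uniform bound $\int_\Omega {\rm dist}^2(\na_h y_h, \SO(3))\, \dd x \leq C h^2$ and $\{y_h\}$ bounded in $W^{1,2}$ (after subtracting constants).

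Next I would invoke the quantitative rigidity estimate on the thin domain $\Omega = \omega \times (-1/2,1/2)$: there is a constant rotation $R_h \in \SO(3)$ with $\|\na_h y_h - R_h\|_{L^2(\Omega)}^2 \leq C \int_\Omega {\rm dist}^2(\na_h y_h, \SO(3))\, \dd x \leq C h^2$. Hence $\na_h y_h - R_h \to 0$ in $L^2$; passing to a subsequence so that $R_h \to R \in \SO(3)$, we get $\na_h y_h \to R$ strongly in $L^2$. Writing $\na_h y_h = (\na' y_h \mid h^{-1}\partial_{x_3} y_h)$, this shows $\na' y_h \to (R\mathsf e_1 \mid R\mathsf e_2)$ and $h^{-1}\partial_{x_3} y_h \to R\mathsf e_3$ in $L^2$. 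In particular $\partial_{x_3} y_h \to 0$, so the weak $W^{1,2}$ limit $y$ of $y_h$ is independent of $x_3$ and may be identified with a map $y \in W^{1,2}(\omega, \R^3)$; moreover $\na' y = (R\mathsf e_1 \mid R\mathsf e_2)$ a.e., which already gives $(\na' y)^T \na' y = {\rm I}_2$, i.e. $y$ is an isometric immersion, and the limit of $\na_h y_h$ has the claimed block form $(\na' y \mid \nu)$ with $\nu = R\mathsf e_3$ and $(\na' y \mid \nu) \in \SO(3)$ a.e.

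The remaining, and more delicate, point is the $W^{2,2}$-regularity of the limit: one must show $(\na' y \mid \nu) \in W^{1,2}(\Omega, \R^{3\times 3})$ rather than merely $L^2$. The standard way — and what I expect to be the main obstacle — is a \emph{local} application of rigidity: cover $\omega$ by small squares, apply the rigidity estimate on each cylinder over a square to obtain a rotation field $R_h(x')$ that is piecewise constant at scale comparable to $h$, and show that the associated difference quotients of $R_h$ are bounded in $L^2$ uniformly in $h$. This is exactly the mechanism by which one upgrades from first-order to second-order information in the F.--J.--M. derivation of nonlinear plate theory: the oscillation of the approximating rotation across adjacent cells is controlled by the rigidity constant times the local energy, and summing over cells yields a uniform $\|\na R_h\|_{L^2(\omega)} \leq C$; then $R_h$ converges weakly in $W^{1,2}$ to a rotation field $R(x')$ whose first two columns are $\na' y$, so $\na' y$ and hence $\nu$ lie in $W^{1,2}$. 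Since this construction is carried out in detail in \cite{Schmidt2007} (and ultimately rests on \cite{F_J_M_2002}), I would cite those references for the technical core and merely indicate how the hypotheses \eqref{cond_W_h_cruc} and Assumption \ref{assunzione_W_0} feed into the argument, since the perturbation ${\rm I}+hB^h$ only affects the estimates at order $h$ and does not change the structure of the proof.
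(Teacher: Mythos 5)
The paper does not give a proof of Theorem~\ref{compattezza}: it is stated as a result collected from \cite{Schmidt2007} (and ultimately from \cite{F_J_M_2002}), so there is no ``paper's own proof'' to compare against. Your first paragraph (coercivity plus the structure \eqref{cond_W_h_cruc} plus an absorption argument yielding $\int_\Omega \dist^2(\na_h y_h, \SO(3))\,\dd x \le C h^2$) is correct, and your third paragraph correctly identifies the mechanism (local rigidity on cubes of side $\sim h$, piecewise-constant rotation field, difference-quotient bound, weak $W^{1,2}$ limit $R(x')$ whose first two columns are $\na'y$). That is indeed the argument in \cite{Schmidt2007,F_J_M_2002}.

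However, your second paragraph is wrong, and not just as a shortcut. First, the rigidity estimate applies to genuine gradients, and $\na_h y_h$ is not the gradient of a function on $\Omega$; one would have to apply rigidity to $v_h(z)=y_h(z',z_3/h)$ on the physical thin domain $\Omega_h$, and the rigidity constant $C(\Omega_h)$ degenerates as $h\downarrow 0$, so the resulting bound on a \emph{single} constant rotation is not $O(h^2)$. Second, and more tellingly, if a single constant rotation $R_h$ did satisfy $\|\na_h y_h - R_h\|_{L^2(\Omega)}^2 \le C h^2$, you would conclude $\na_h y_h \to R$ for a constant $R\in\SO(3)$, forcing $\na'y$ to be constant, $y$ affine, and $A_y\equiv 0$. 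That would contradict the theorem (whose whole point is that the limit can be an arbitrary $W^{2,2}$ isometric immersion) and would render Theorem~\ref{thm_Schmidt_Gamma} vacuous. The local rigidity construction of your third paragraph is not merely a refinement needed to upgrade regularity from $L^2$ to $W^{1,2}$: it is essential already to identify the (non-constant, $x'$-dependent) limit $\bigl(\na'y \mid \nu\bigr)$. You should drop the second paragraph entirely and go directly from the energy bound on $\dist(\na_h y_h,\SO(3))$ to the local rigidity argument.
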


Before proceeding, let us introduce some more notation 
and denote by $Q_3(M)$, $M\in\R^{3\times3}$, 
the quadratic form 
$\rmD^2W_0({\rm I})[M,M]$, where
$\rmD^2W_0({\rm I})$ stands for the second differential
of $W_0$ evaluated at ${\rm I}$.
Moreover, define, for every $G\in\R^{2\times 2}$, 
\begin{equation}
\label{Q2}
Q_2(G)
\,:=\,
\min_{\stackrel{b\in\R^2}{a\in\R}}
Q_3
\left(
\left[
\begin{tabular}{c|c}
$G$ & $b$\\
\hline
$0$ & $a$
\end{tabular}
\right]
\right),
\end{equation}
and in turn
\begin{equation}
\label{barQ2}
\overline Q_2(G)
\,:=\,
\min_{D\in\R^{2\times2}}\int\limits_{-1/2}^{1/2}
Q_2\big(D+t\,G+\check B(t)\big){\rm d}t,
\end{equation}
where $\check B$ is obtained from $B$ by omitting the last row
and the last column.

\begin{theorem}[$\Gamma$-convergence]
\label{thm_Schmidt_Gamma}
The functionals 
$\mathscr E^h/h^2$ $\Gamma$-converge as $h\downarrow0$,
with respect to the strong and the weak topology of 
${\rm W}^{1,2}(\Omega,\R^3)$,
to
\begin{equation*}
\mathscr E^{lim}(y)
\,:=\,
\left\{
\begin{array}{ll}
\displaystyle\frac12\int\limits_{\omega}\overline Q_2(A_y(x')){\rm d}x'
&\qquad\mbox{if}\quad y\in\mathcal A_{iso},\\
\,& \,\\
+\infty &\qquad\mbox{otherwise in}\quad{\rm W}^{1,2}(\Omega,\R^3),
\end{array}\right.
\end{equation*}
where $A_y$ denotes the second fundamental
form associated with the surface $y(\omega)$.
\end{theorem}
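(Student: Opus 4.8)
The statement is a (mildly simplified) version of a result of Schmidt \cite{Schmidt2007}, so the plan is only to indicate the now-classical two-step $\Gamma$-convergence argument for nonlinear dimension reduction, the geometric rigidity estimate of \cite{F_J_M_2002} playing the central role, supplemented by a careful bookkeeping of the prestrain factor $I+hB^h$ in \eqref{cond_W_h_cruc}. Throughout one would exploit that $W_0$ is ${\rm C}^2$ near $\SO(3)$ and minimised at $I$ (so $\rmD W_0(I)=0$ and $W_0(I+hM)=\tfrac{h^2}2Q_3(M)+o(h^2|M|^2)$ as $h\downarrow0$), frame-indifferent (so $Q_3$ depends only on $\sym M$), and coercive.

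For the lower bound (the $\Gamma$-$\liminf$ inequality) I would take $y^h\rightharpoonup y$ in ${\rm W}^{1,2}(\Om,\R^3)$, assume $\liminf_h\mathscr E^h(y^h)/h^2<\infty$, and pass to a subsequence realising this $\liminf$ as a limit and satisfying \eqref{hyp_comp}; Theorem \ref{compattezza} then gives $y\in\mathcal A_{iso}$ and $\na_hy^h\to R:=(\na'y\,|\,\nu)\in\SO(3)$ strongly in ${\rm L}^2$. Since $\|(I+hB^h)^{\pm1}\|_\infty\le C$ for small $h$, coercivity of $W_0$ yields the pointwise bound $\dist^2(\na_hy^h,\SO(3))\le C\,W^h(x_3,\na_hy^h)+Ch^2$; tiling $\omega$ by squares of side comparable to $h$ and applying the rigidity estimate on each cylinder, one constructs maps $R^h\in{\rm W}^{1,2}(\omega,\SO(3))$ with $\|\na_hy^h-R^h\|_{{\rm L}^2}^2\le Ch^2$ and $\|\na'R^h\|_{{\rm L}^2}\le C$, so that $R^h\to R$ and the scaled strains $G^h:=\tfrac1h\big((R^h)^T\na_hy^h-I\big)$ are bounded in ${\rm L}^2(\Om,\R^{3\times3})$; extract a weak limit $G$. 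A computation using that $\na'y$, $\nu$ are independent of $x_3$ and that $y$ is a ${\rm W}^{2,2}$-isometry identifies the symmetrised in-plane block of $G$ with $x_3A_y$ (up to the sign convention for the second fundamental form), the out-of-plane entries remaining free. Then, by frame indifference, $W^h(x_3,\na_hy^h)=W_0\big((I+hG^h)(I+hB^h)\big)=\tfrac{h^2}2Q_3\big(G^h+B^h\big)+o(h^2)$ uniformly in $x$; dividing by $h^2$, using $Q_3\ge0$, the weak lower semicontinuity of $M\mapsto\int_\Om Q_3(\sym M)$, and finally minimising out the free components fibre-wise — first the out-of-plane ones, which produces $Q_2$ as in \eqref{Q2}, then the $x_3$-average of the in-plane block, which produces the $\min_D$ in \eqref{barQ2} — one arrives at $\liminf_h\mathscr E^h(y^h)/h^2\ge\tfrac12\int_\omega\overline Q_2(A_y)$.

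For the upper bound (the $\Gamma$-$\limsup$ inequality) I would first treat smooth $y\in\mathcal A_{iso}$ through the ansatz
\[
y^h(x)\,:=\,y(x')+h\big(x_3\,\nu(x')+u(x')\big)+h^2\,\theta(x',x_3),
\]
choosing the order-$h$ in-plane field $u$ so that $\sym\big((\na'y)^T\na'u\big)$ equals the optimal $D$ in \eqref{barQ2} (which can be arranged, if necessary after localising and a diagonal argument), and the corrector $\theta$ so that the last row and column of the scaled strain realise the minimisers $(b,a)$ in \eqref{Q2}. Differentiating gives $\na_hy^h\to R$ strongly, hence $y^h\to y$ in ${\rm W}^{1,2}$, and the Taylor expansion above gives
\[
\mathscr E^h(y^h)/h^2\;\longrightarrow\;\tfrac12\int_\omega\int_{-1/2}^{1/2}Q_2\big(D(x')+x_3A_y(x')+\check B(x_3)\big)\dd x_3\,\dd x'\;=\;\tfrac12\int_\omega\overline Q_2(A_y)\,\dd x'.
\]
For a general $y\in\mathcal A_{iso}$ one invokes the density of smooth isometric immersions in $\mathcal A_{iso}$ (in the ${\rm W}^{2,2}$-topology) together with continuity of $y\mapsto\tfrac12\int_\omega\overline Q_2(A_y)$ along such approximations and a diagonal argument.

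The main obstacle is the lower bound: obtaining the rigidity estimate with the correct power of $h$ on the thin cylinder (reconciling the grid scale $\sim h$ with the unit thickness after rescaling), controlling the product $(I+hG^h)(I+hB^h)$ and all $o(h^2)$ remainders uniformly in $x$, and, most delicately, extracting from the weak limit $G$ the identification of its symmetrised in-plane block with $x_3A_y$ using only the ${\rm W}^{2,2}$-isometry constraint; the reduction from $Q_3$ to $Q_2$ to $\overline Q_2$ through the two nested fibre-wise minimisations has then to be carried out at the level of the lower-semicontinuous envelope, convexity of $Q_3$ being what legitimises the pointwise optimisation.
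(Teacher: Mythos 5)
The paper does not prove this theorem: it is recalled, in a slightly simplified form, from Schmidt \cite{Schmidt2007}, whose proof follows exactly the two-step FJM template you outline (geometric rigidity on a grid of scale $h$ and fibrewise quadratic minimisation for the $\liminf$; corrector ansatz and density of smooth isometric immersions for the $\limsup$). Your summary is a faithful account of that argument and correctly flags the genuine technical points — truncation to justify the Taylor expansion on strains that are only ${\rm L}^2$-bounded, identifying the limiting strain's affine-in-$x_3$ structure (whose $x_3$-independent part is what gets minimised out as $D$, a point you phrase a bit loosely but then handle correctly), and the density of smooth isometries needed to pass from smooth $y$ to general $y\in\mathcal A_{iso}$.
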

\noindent Recall that the second fundamental form of $y(\omega)$
at a point $y(x')$ can be expressed as
$\big(\na'y(x')\big)^T\na'\nu(x')$,
where $\nu:=\pa_{x_1}y\wedge\pa_{x_2}y$.


\subsection
{Splay-bend and twisted nematic elastomer plates}
\label{nostra_derivazione}


We want to apply the theory presented in the previous section
to our two models.
We first focus on the splay-bend case,
whose associated rescaled stored energy density,
considering expression \eqref{prot_energy} together with\eqref{eq:resc_ener} and \eqref{eq:resc_SS},
is given, for every $x_3\in(-1/2,1/2)$ and 
every $F\in\R^{3\times3}$ with $\det F>0$, by
\begin{equation*}
W^h_{SB}(x_3,F)
\,=\,
\frac{\mu}2\Big[(F^TF)\cdot
\overline C_{h,SB}^{-1}(x_3)
-3-2\log(\det F)\Big]
+W_{vol}(\det F).
\end{equation*}
Recall that
\begin{equation}
\label{SS_SB}
\overline C_{h,SB}(x_3)
=
{\rm I}
+\frac{\alpha_0h}{h_0}
\left(N_{SB}(x_3)\otimes N_{SB}(x_3)-
\frac{{\rm I}}3\right)
+R^h_{SB}(x_3),
\qquad\quad
\|R^h_{SB}\|_{\infty}=o(h)
\end{equation}
(see \eqref{tensor_N} for the definition of $N_{SB}$).
Defining
\begin{equation}
\label{nostra_W}
W_0(F)
\,:=\,
\frac{\mu}2
\Big[
|F|^2-3-2\log(\det F\,)
\Big]
+
W_{vol}(\det F\,),
\end{equation}
for every $F\in\R^{3\times3}$ with $\det F>0$,
and setting
\begin{equation}
\label{delta_0}
\overline U_{h,SB}
:=\sqrt{\overline C_{h,SB}},
\end{equation}
yields
$W^h_{SB}(x_3,F)=W_0\big(F\overline U_{h,SB}^{-1}(x_3)\big)$.
Note that  
\begin{equation*}
\overline U_{h,SB}^{-1}
=
{\rm I}
+h\left[-\delta_0\left(M_{SB}-
\frac{{\rm I}}3\right)+
  \frac{r^h_{SB}}h\right] ,    
\end{equation*}
with $\|r^h_{SB}\|_{\infty}=o(h)$,
where we have used the notation
\begin{equation}
\label{eq:cose}
\delta_0:=\frac{\alpha_0}{2h_0},
\qquad\qquad
M_{SB}
:=
N_{SB}\otimes N_{SB}
=
\left(
\begin{array}{ccc}
\cos^2f_1 & 0 & 
\frac12\sin(2f_1)\\
0 & 0 & 0\\
\frac12\sin(2f_1) 
    & 0 & 
\sin^2f_1
\end{array}
\right).
\end{equation}
Here, the function $x_3\mapsto f_1(x_3)$
is defined as in \eqref{f_h}, with $h=1$.
All in all, we can write
\begin{equation}
\label{W_h_SB}
W^h_{SB}(x_3,F)
=
W_0\big(F(
{\rm I}
+hB^h_{SB}(x_3))\big),
\qquad\qquad
B^h_{SB}:=\left[-\delta_0\left(M_{SB}-
\frac{{\rm I}}3\right)+
  \frac{r^h_{SB}}h\right].
\end{equation}
Since $\|r^h_{SB}\|_{\infty}=o(h)$, we have that
$B^h_{SB}\longrightarrow B_{SB}$
in
${\rm L}^{\infty}\big((-1/2,1/2),\R^{3\times 3}\big)$,
where
$
B_{SB}:=-\delta_0\left(M_{SB}(x_3)-
\frac{{\rm I}}3\right)
$.
In turn, also in view of Remark \ref{rmk_W_0} below,
we have shown that the splay-bend model introduced 
in Section \ref{subsec:three_dim_model}
perfectly fits the mathematical theory 
summarized in the previous section.
Hence, 
we have to compute the 2D energy density according to 
formula \eqref{barQ2}.
First of all, we have that
$
Q_3(M)
\,=\,
2\mu\,|\sym M|^2+W_{vol}''(1)\,\tr^2M.
$
Using this expression, 
we can compute $Q_2$ for every $G\in\R^{2\times2}$ (see \eqref{Q2}): 
\begin{align}
\label{Q2nostra}
Q_2(G)
&\,=\,
\min_{\stackrel{b\in\R^2}{a\in\R}}
\left\{
2\mu\left|\left(
\begin{tabular}{c|c}
$\sym G$ & $b/2$\nonumber\\
\hline
$b^T/2$ & $a$
\end{tabular}
\right)\right|^2
+
W_{vol}''(1)
(\tr\,G+a)^2
\right\}\\
&\,=\,
2\mu\,|\sym G|^2+W_{vol}''(1)\,\tr^2G+
\min_{a\in\R}
\Big[\big(2\mu+W_{vol}''(1)\big)a^2+
2\,W_{vol}''(1)\tr\,G\,a\Big]
\,=\,
2\mu\left(|\sym G|^2+\gamma\,\tr^2G\right),
\end{align}
having introduced the notation
\begin{equation}
\label{def:lambda}
\gamma:=
\frac{W_{vol}''(1)}{2\mu+W_{vol}''(1)}.
\end{equation}
Finally (cfr \eqref{barQ2}), 
note that $\check B_{SB}$ is given by 
\begin{equation*}
\check B_{SB}=-\delta_0\left(\check M_{SB}-
\frac{{\rm I}_2}3\right),
\qquad
\mbox{with}
\qquad
\check M_{SB}(x_3):=
\big(N_{SB}(x_3)\otimes N_{SB}(x_3)\big)^{\check{}}=
\left(
\begin{array}{cc}
\cos^2\big(\frac{\pi}4+\frac\pi2x_3\big) & 0\\
0 & 0
\end{array}
\right).
\end{equation*}
We are now in the position to compute,
for every $G\in\R^{2\times 2}$,
\begin{align*}
\overline Q_{2,SB}(G)
&:=
\min_{D\in\R^{2\times2}}\int\limits_{-1/2}^{1/2}
Q_2\left(D+t\,G+\check B_{SB}(t)\right){\rm d}t\\
&=
2\mu
\min_{D\in\Sym(2)}\int\limits_{-1/2}^{1/2}
\!\!\left\{
\left|D+t\,\sym G-\delta_0\,\check M_{SB}(t)+\frac{\delta_0}3
{\rm I}_2
\right|^2
\!\!+
\gamma\,
\tr^2\!
\left(D+t\,G-\delta_0\,\check M_{SB}(t)+\frac{\delta_0}3
{\rm I}_2\right)
\!\!\right\}{\rm d}t.
\end{align*}
The integrals
\begin{equation*}
\int\limits_{-1/2}^{1/2}|\check M_{SB}|^2{\rm d}t
=
\int\limits_{-1/2}^{1/2}
\cos^4\left(\frac{\pi}4+\frac\pi2t\right){\rm d}t
=\frac 38,
\qquad
\int\limits_{-1/2}^{1/2}t\,\tr\,\check M_{SB}{\rm d}t
=
\int\limits_{-1/2}^{1/2}t
\cos^2\left(\frac{\pi}4+\frac\pi2t\right){\rm d}t
=
-\frac1{\pi^2}
\end{equation*}
and other elementary computations imply that
$\overline Q_{2,SB}(G)/(2\mu)$ equals
\begin{equation*}
\frac1{12}\Big(|\sym G|^2+\gamma\,\tr^2G\Big)
+
\frac{2\,\delta_0}{\pi^2}
\Big(\sym G\cdot{\rm diag}(1,0)+\gamma\,\tr\,G\Big)
+
\left(\frac{19+11\,\gamma}{72}\right)\delta_0^2
+
\min_{D\in\Sym(2)}q_{SB}(D),
\end{equation*}
where
\begin{equation*}
q_{SB}(D):=|D|^2+\gamma\,\tr^2D-
\delta_0\left[\sym D\cdot{\rm diag}(1,0)
+\Big(\frac{2+\gamma}3\Big)\tr\, D\right].
\end{equation*}
It is easy to see that
\begin{equation*}
\min_{D\in\Sym(2)}q_{SB}(D)
=
q_{SB}\big({\rm diag}(\delta_0/6,-\delta_0/3)\big)
=
-\Big(\frac{5+\gamma}{36}\Big)\delta_0^2,
\end{equation*}
and in turn that
\begin{equation*}
\overline Q_{2,SB}(G)=2\mu\bigg[
\frac1{12}\Big(|\sym G|^2+\gamma\,\tr^2G\Big)
+
\frac{2\delta_0}{\pi^2}
\Big(\sym G\cdot{\rm diag}(1,0)+\gamma\,\tr\, G\Big)
+
\Big(\frac{1+\gamma}8\Big)\delta_0^2
\bigg].
\end{equation*}
It is again a simple computation showing that there exist constants 
$\alpha_{SB},\,\beta_{SB}\in\R$ and $\overline A_{SB}\in\Sym(2)$
such that
\begin{equation}
\label{barQ_2_SB}
\overline Q_{2,SB}(G)
\,=\,
\alpha_{SB}\,Q_2[G-\overline A_{SB}]^2+\beta_{SB},
\qquad\quad
\mbox{for every}\quad G\in\Sym(2),
\end{equation}
and they are given by
\begin{equation}
\label{const_SB}
\alpha_{SB}\,=\,\frac1{12},
\quad\qquad
\overline A_{SB}\,=\,\frac{12\delta_0}{\pi^2}\,{\rm diag(-1,0)},
\quad\qquad
\beta_{SB}\,=\,\mu\,(1+\gamma)\delta_0^2\Big(\frac{\pi^4-12}4\Big).
\end{equation}

To state our result, let us define the functional
$\mathscr E^{lim}_{SB}:
\mathcal A_{iso}\longrightarrow[0,\infty)$,
where $\mathcal A_{iso}$ is the class defined in \eqref{Aiso},
as
\begin{align}
\mathscr E^{lim}_{SB}(y)
&:=
\frac12
\int\limits_{\omega}\overline Q_{2,SB}(A_y(x')){\rm d}x'\nonumber\\
&=
\frac{\mu}{12}\int\limits_{\omega}
\!\!\left\{\Big|{\rm A}_y(x')\!-\!\frac{12\delta_0}{\pi^2}\,
{\rm diag}\big(-1,0\big)\Big|^2
\!\!+\gamma
\Big({\rm H}_y(x')\!+\!
\frac{12\delta_0}{\pi^2}\Big)^2\!\right\}\!\dd x'
+
\mu\,(1+\gamma)\delta_0^2
\Big(\frac{\pi^4-12}8\Big)\!|\omega|.
\label{2D_energy_functional_SB}
\end{align}
Here, the symbol ${\rm H}_y$ 
denotes the mean curvature of $y(\omega)$,
hence ${\rm H}_y=\tr A_y$.
Note that for every $y\in\mathcal A_{iso}$
we have that $|{\rm A}_y|\in{\rm L}^2(\omega)$,
and in turn $\mathscr E^{lim}_{SB}(y)<+\infty$.

Theorems 
\ref{compattezza} and \ref{thm_Schmidt_Gamma}
and standard results of the theory of $\Gamma$-convergence,
tell us that $3D$ low-energy sequences converge,
up to subsequences, to a minimiser of the
derived 2D model.
This is the content of the following theorem. 
We refer the reader to \eqref{mathscr_E_h} and the subsequent paragraph
for the definition of the 3D total-energy functionals $\mathscr F_{SB}^h$.

\begin{theorem}
[Splay-bend plate model]
\label{main_thm_SB}
Suppose that the rescaled loads $f_h$ 
are such that $f_h/h^2\rightharpoonup f$
weakly in ${\rm L}^2(\Om,\R^3)$ and satisfy the
normalizing condition $\int_{\Om}f_h\,\dd x=0$.
Define
the 2D total energy functional
$\mathscr F_{SB}^{lim}:\mathcal A_{iso}\longrightarrow\R$
as 
\begin{equation*}
\mathscr F_{SB}^{lim}(y)
\,:=\,
\mathscr E_{SB}^{lim}(y)
-\int\limits_{\omega} f^{lim}(x')\cdot y(x')\,\dd x',
\end{equation*}
where 
$\mathscr E_{SB}^{lim}$ is defined as in \eqref{2D_energy_functional_SB}
and 
$f^{lim}(x')
\,:=\,
\int_{-1/2}^{1/2}f(x',x_3)\,\dd x_3,$
for a.e. $x'\in\omega.$
Suppose that $\{y_h\}$ is a low-energy sequence, viz.
\begin{equation*}
\lim_{h\to0}\frac{\mathscr F_{SB}^h(y_h)}{h^2}
\,=\,
\lim_{h\to0}\frac
{\inf_{{\rm W}^{1,2}(\Omega,\R^3)}\mathscr F^h_{SB}}
{h^2}
\,=:\,
m.
\end{equation*}
Then, up to a subsequence, 
$y_h\longrightarrow y_{SB}$ in 
${\rm W}^{1,2}(\Om,\R^3)$, where
$y_{SB}\in\mathcal A_{iso}$ is a minimiser of the
2D model, that is 
\[\mathscr F_{SB}^{lim}(y_{SB})
\,=\,
\min_{\mathcal A_{iso}}\mathscr F_{SB}^{lim}.
\]
Moreover, $m=\mathscr F_{SB}^{lim}(y_{SB}).$
\end{theorem}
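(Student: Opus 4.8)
The plan is to derive the statement from the $\Gamma$-convergence result of Theorem~\ref{thm_Schmidt_Gamma} and the compactness result of Theorem~\ref{compattezza}, treating the applied loads as a continuous perturbation of the stored energy. First I would check that the splay-bend model fits the abstract framework of Subsection~\ref{rigorous_res}: by \eqref{W_h_SB}, $W^h_{SB}(x_3,F)=W_0\big(F({\rm I}+hB^h_{SB}(x_3))\big)$ with $W_0$ as in \eqref{nostra_W} satisfying Assumption~\ref{assunzione_W_0} (see Remark~\ref{rmk_W_0}) and $B^h_{SB}\to B_{SB}$ in ${\rm L}^\infty$, while the algebraic identity \eqref{barQ_2_SB}--\eqref{const_SB} shows that, for this model, the limit density $\tfrac12\overline Q_2$ produced by \eqref{barQ2} coincides with the integrand of $\mathscr E^{lim}_{SB}$ in \eqref{2D_energy_functional_SB}. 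Consequently Theorem~\ref{thm_Schmidt_Gamma} gives that $\mathscr E^h_{SB}/h^2$ $\Gamma$-converges, with respect to the strong and weak topology of ${\rm W}^{1,2}(\Omega,\R^3)$, to the functional equal to $\mathscr E^{lim}_{SB}$ on $\mathcal A_{iso}$ and to $+\infty$ elsewhere.

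Next I would establish the a priori bound $\mathscr E^h_{SB}(y_h)\le Ch^2$. From $f_h/h^2\rightharpoonup f$ in ${\rm L}^2(\Omega,\R^3)$ one has $\|f_h\|_{{\rm L}^2}\le Ch^2$; evaluating $\inf\mathscr F^h_{SB}$ on a recovery sequence for some $w_0\in\mathcal A_{iso}$ (e.g.\ a planar immersion) shows that $m<+\infty$, hence $\mathscr F^h_{SB}(y_h)\le Ch^2$ for $h$ small. Since $\int_\Omega f_h\,\dd x=0$, the functional $\mathscr F^h_{SB}$ is invariant under addition of a constant to $y$, so I may assume $y_h$ has vanishing mean on $\Omega$. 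Transferring Assumption~\ref{assunzione_W_0}(iii) through the multiplicative perturbation ${\rm I}+hB^h_{SB}={\rm I}+O(h)$ yields $W^h_{SB}(x_3,F)\ge c\,\dist^2(F,\SO(3))-Ch^2$, so $\mathscr E^h_{SB}(y_h)\ge c'\|\na_hy_h\|_{{\rm L}^2}^2-C$; combining this with Poincar\'e's inequality $\|y_h\|_{{\rm L}^2}\le C\|\na y_h\|_{{\rm L}^2}\le C\|\na_hy_h\|_{{\rm L}^2}$ (valid since $h\le1$ and $y_h$ has zero mean) and Cauchy--Schwarz for the load term, the bound $\mathscr F^h_{SB}(y_h)\le Ch^2$ forces $\|\na_hy_h\|_{{\rm L}^2}\le C$ by Young's inequality, and then $\mathscr E^h_{SB}(y_h)\le Ch^2+\big|\int_\Omega f_h\cdot y_h\,\dd x\big|\le Ch^2$. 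Theorem~\ref{compattezza} then yields, along a subsequence, $\na_hy_h\to(\na'y\mid\nu)$ strongly in ${\rm L}^2$ with $(\na'y\mid\nu)\in\SO(3)$ a.e.\ and independent of $x_3$; in particular $y\in\mathcal A_{iso}$, and since $\pa_{x_3}y_h=h(\pa_{x_3}y_h/h)\to0$ in ${\rm L}^2$ one gets $\na y_h\to\na y$ in ${\rm L}^2$, while Rellich gives $y_h\to y$ in ${\rm L}^2(\Omega)$; altogether $y_h\to y$ strongly in ${\rm W}^{1,2}(\Omega,\R^3)$, with $y$ identified with an element of $\mathcal A_{iso}\subset{\rm W}^{2,2}(\omega,\R^3)$.

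Finally I would pass to the limit by the standard $\Gamma$-convergence-with-continuous-perturbation argument. The load term converges along this sequence: $h^{-2}\int_\Omega f_h\cdot y_h\,\dd x=\int_\Omega(f_h/h^2)\cdot y_h\,\dd x\to\int_\Omega f\cdot y\,\dd x=\int_\omega f^{lim}\cdot y\,\dd x'$ by weak--strong convergence in ${\rm L}^2$ and the $x_3$-independence of $y$, and likewise for any recovery sequence. Together with the liminf inequality of Theorem~\ref{thm_Schmidt_Gamma}, $\liminf_h h^{-2}\mathscr E^h_{SB}(y_h)\ge\mathscr E^{lim}_{SB}(y)$, this gives $m=\lim_h h^{-2}\mathscr F^h_{SB}(y_h)\ge\mathscr F^{lim}_{SB}(y)\ge\inf_{\mathcal A_{iso}}\mathscr F^{lim}_{SB}$. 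Conversely, for any $w\in\mathcal A_{iso}$ a recovery sequence $w_h\to w$ in ${\rm W}^{1,2}$ with $h^{-2}\mathscr E^h_{SB}(w_h)\to\mathscr E^{lim}_{SB}(w)$ satisfies $h^{-2}\mathscr F^h_{SB}(w_h)\to\mathscr F^{lim}_{SB}(w)$, whence $m=\lim_h h^{-2}\inf\mathscr F^h_{SB}\le\mathscr F^{lim}_{SB}(w)$; taking the infimum over $w$ yields $m\le\inf_{\mathcal A_{iso}}\mathscr F^{lim}_{SB}$. All displayed inequalities are thus equalities, so $y$ realises $\min_{\mathcal A_{iso}}\mathscr F^{lim}_{SB}=m=\mathscr F^{lim}_{SB}(y)$, and setting $y_{SB}:=y$ completes the argument. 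The one step needing genuine care is the a priori estimate: one must transfer the coercivity of Assumption~\ref{assunzione_W_0}(iii) to $W^h_{SB}$ and then absorb the $O(h^2)$ defect and the linear load contribution into the quadratic term $\|\na_hy_h\|_{{\rm L}^2}^2$---which is exactly where the normalisations $\int_\Omega f_h\,\dd x=0$ and $\|f_h\|_{{\rm L}^2}\le Ch^2$ are used; everything else is the routine equi-coercivity plus $\Gamma$-convergence scheme.
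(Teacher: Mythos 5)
Your proposal is correct and is exactly the argument the paper has in mind: the paper itself provides no detailed proof of Theorem~\ref{main_thm_SB}, stating only that it follows from Theorems~\ref{compattezza}, \ref{thm_Schmidt_Gamma} and ``standard results of the theory of $\Gamma$-convergence.'' You have correctly filled in the standard scheme---verification that the model fits the abstract hypotheses, equicoercivity via the transferred coercivity bound plus Poincar\'e and the normalisation $\int_\Omega f_h\,\dd x=0$, compactness, and the liminf/limsup passage with the load treated as a continuous perturbation using $f_h/h^2\rightharpoonup f$ weakly against strong $L^2$ convergence of $y_h$ and $w_h$.
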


If we let $f=0$ in the above theorem, we have
\begin{equation*}
\min_{\mathcal A_{iso}}\mathscr F_{SB}^{lim}
\,=\,
\min_{\mathcal A_{iso}}\mathscr E_{SB}^{lim}
\,=\,
\mathscr E_{SB}^{lim}(y_{SB})
\,=\,\mu\,(1+\gamma)\delta_0^2\Big(\frac{\pi^4-12}8\Big)\!|\omega|,
\end{equation*}
and the associated fundamental form of $y_{SB}$
is given by $(12\delta_0/\pi^2)\,{\rm diag}(-1,0)$.

Let us now fix a low-energy sequence $\{y_h\}$ 
converging to a minimiser $y\in\mathcal A_{iso}$
and rephrase the theorem in terms of the physical
total energies $\hat{\mathscr F}_{SB}^h$ 
defined in \eqref{phys_quan}. 
Defining the deformations $v_h(z',z_3)=y_h(z',z_3/h)$
in the physical reference configuration $\Omega_h$,
we have
$\lim_{h\to0}\hat{\mathscr F}_{SB}^h(v_h)/h^3
=
\min_{\mathcal A_{iso}}\mathscr F_{SB}^{lim},
$
in view of \eqref{correspondence}.
Equivalently, for a given small thickness $h_0$,
the approximate identity
\begin{multline}
\label{phys_stat_splay}
\hat{\mathscr F}_{SB}^{h_0}(v_{h_0})
\,\cong\,
\frac{\mu\,h_0^3}{12}\int\limits_{\omega}
\!\!\left\{\Big|{\rm A}_y(x')\!+\!\frac{12\delta_0}{\pi^2}\,
{\rm diag}\big(1,0\big)\Big|^2
\!\!+\gamma
\Big({\rm H}_y(x')\!+\!
\frac{12\delta_0}{\pi^2}\Big)^2\!\right\}\!\dd x'\\
+
\mu\,h_0^3\,(1+\gamma)\delta_0^2
\Big(\frac{\pi^4-12}8\Big)\!|\omega|
-h_0^3\int\limits_{\omega} f^{lim}(x')\cdot y(x')\,\dd x'
\end{multline}
holds true,
modulo terms of order higher than $3$ in $h_0$.

\begin{remark}
\label{rmk_W_0}
\upshape
Clearly, the function $W_0$ defined in \eqref{nostra_W}
vanishes in $\SO(3)$. 
Also, by the standard inequality between arithmetic and geometric mean
we have that $|F|^2\geq3(\det F)^{2/3}$ for every $F\in\R^{3\times3}$
with positive determinant, which proves that
\begin{equation*}
W_0(F)\,\geq\,\frac{3\mu}2\,\psi\bigg(\frac{|F|^2}3\bigg),
\qquad\qquad
\psi(t)\,:=\,t-1-\log t, \quad t>0.
\end{equation*}
In particular, we have that $W_0(F)=0$ iff $F\in\SO(3)$
and that $W_0(F)\geq C|F|^2$ for every large $|F|$.
Moreover, due to the regularity of $W_0$ around $\SO(3)$, 
the energy density grows quadratically close to $\SO(3)$.
These facts show that $W_0$ satisfies Assumption \ref{assunzione_W_0}.
\end{remark}


\bigskip

We now move to the twisted geometry.
In this case, the (renormalized) spontaneous
strain distribution is given by 
\begin{equation}
\label{SS_T}
\overline C_{h,T}(x_3)
=
{\rm I}
+\frac{\alpha_0h}{h_0}
\left(N_T(x_3)\otimes N_T(x_3)-
\frac{{\rm I}}3\right)
+R^h_T(x_3),
\qquad\quad
\|R^h_T\|_{\infty}=o(h),
\end{equation}
where $N_T$ is defined as in \eqref{tensor_N}, and 
the (rescaled) stored energy density is
\begin{equation*}
W^h_T(x_3,F)
\,=\,
\frac{\mu}2\Big[(F^TF)\cdot
\overline C_{h,T}^{-1}(x_3)
-3-2\log(\det F)\Big]
+W_{vol}(\det F)
\end{equation*}
on every deformation gradient 
$F\in\R^{3\times3}$ such that $\det F>0$.
Proceeding similarly to the splay-bend case,
we set
$
\overline U_{h,T}
:=\sqrt{\overline C_{h,T}}
$,
so that
$W^h_T(x_3,F)=W_0\big(F\overline U_{h,T}^{-1}(x_3)\big)$,
being $W_0$ defined as in \eqref{nostra_W}.
Note that, by Taylor-expanding $\sqrt{\overline C_{h,T}}$
around ${\rm I}$, we get  
\begin{equation*}
\overline U_{h,T}^{-1}
=
{\rm I}
+h\left[-\delta_0\left(M_T-
\frac{{\rm I}}3\right)+
  \frac{r^h_T}h\right],
  \qquad
M_T
:=
N_T\otimes N_T
=
\left(
\begin{array}{ccc}
\cos^2f_1 & \frac12\sin(2f_1) & 0 \\
\frac12\sin(2f_1) &  \sin^2f_1 & 0 \\
0 & 0 & 0\\
\end{array}
\right).    
\end{equation*}
where $\|r^h_T\|_{\infty}=o(h)$,
the positive constant $\delta_0$ is defined as in \eqref{eq:cose},
and where $x_3\mapsto f_1(x_3)$
given by \eqref{f_h} (with $h=1$).
Hence, we can write
\begin{equation}
\label{W_h_T}
W^h_T(x_3,F)
=
W_0\big(F(
{\rm I}
+hB^h_T(x_3))\big),
\qquad\qquad
B^h_T:=\left[-\delta_0
\left(M_T-\frac{{\rm I}}3\right)+
  \frac{r^h_T}h\right],
\end{equation}
and we have that
$B^h_T\longrightarrow B_T$
in
${\rm L}^{\infty}\big((-1/2,1/2),\R^{3\times 3}\big)$,
where
$
B_T:=-\delta_0\left(M_T(x_3)-\frac{{\rm I}}3\right)
$.
Now, arguing as for the splay-bend case
and using \eqref{Q2nostra}--\eqref{def:lambda},
we are left to derive (cfr \eqref{barQ2}) 
the expression for
$\check B_T=-\delta_0\left(\check M_T-
\frac{{\rm I}_2}3\right)$,
where
\begin{equation*}
\check M_T(x_3)
:=
\big(N_T(x_3)\otimes N_T(x_3)\big)^{\check{}}
=
\left(
\begin{array}{cc}
\cos^2\big(\pi/4+\pi x_3/2\big) & 
\frac12\sin\big(\pi/2+\pi x_3\big)\\
\frac12\sin\big(\pi/2+\pi x_3\big) & 
\sin^2\big(\pi/4+\pi x_3/2\big)
\end{array}
\right),
\end{equation*}
and to compute, for every $G\in\R^{2\times 2}$,
\begin{align*}
\overline Q_2(G)
&:=
\min_{D\in\R^{2\times2}}\int\limits_{-1/2}^{1/2}
Q_2\big(D+t\,G+\check B_T(t)\big){\rm d}t\\
&=
2\mu
\min_{D\in\Sym(2)}\int\limits_{-1/2}^{1/2}
\!\!\left\{
\left|D+t\,\sym G-\delta_0\,\check M_T(t)+\frac{\delta_0}3
{\rm I}_2
\right|^2
\!\!+
\gamma\,
\tr^2\!
\left(D+t\,G-\delta_0\,\check M_T(t)+\frac{\delta_0}3
{\rm I}_2\right)
\!\!\right\}{\rm d}t.
\end{align*}

The integrals

\begin{equation*}
\int\limits_{-1/2}^{1/2}
\!\!\cos^2\left(\frac{\pi}4+\frac{\pi}2t\right)
{\rm d}t
\,\,=
\int\limits_{-1/2}^{1/2}
\!\!\sin^2\left(\frac{\pi}4+\frac{\pi}2t\right)
=
\frac12,
\qquad
\int\limits_{-1/2}^{1/2}
\!\!\sin\left(\frac{\pi}2+\pi t\right){\rm d}t
\,=\,\frac2{\pi},
\end{equation*}
and
\begin{equation*}
\int\limits_{-1/2}^{1/2}
\!\!t\cos^2\left(\frac{\pi}4+\frac{\pi}2t\right)
{\rm d}t
\,=\,
-\!\!\int\limits_{-1/2}^{1/2}
\!\!t\sin^2\left(\frac{\pi}4+\frac{\pi}2t\right)
\,=\,
-\frac1{\pi^2},
\qquad
\int\limits_{-1/2}^{1/2}
\!\!t\,\sin\left(\frac{\pi}2+\pi t\right){\rm d}t
\,=\,0,
\end{equation*}
give
\begin{equation*}
\int\limits_{-1/2}^{1/2}\!\!\check M_T\,{\rm d}t
\,\,=\,
\left(
\begin{array}{cc}
1/2 & 1/\pi\\
1/\pi & 1/2
\end{array}
\right)
\qquad
\mbox{and}
\qquad
\int\limits_{-1/2}^{1/2}
\!\!t\,\check M_T\,{\rm d}t
\,\,=\,
\left(
\begin{array}{cc}
-1/\pi^2 & 0\\
0 & 1/\pi^2
\end{array}
\right).
\end{equation*}
These computations, together with the fact that
$\tr\,\check M_T=|\check M_T|=1$, 
show that
$\overline Q_{2,T}(G)/(2\mu)$ equals
\begin{equation*}
\frac1{12}\Big(|\sym G|^2+\gamma\,\tr^2G\Big)
+
\frac{2\,\delta_0}{\pi^2}\,
\sym G\cdot{\rm diag}(1,-1)
+
\left(\frac{5+\gamma}9\right)\delta_0^2
+
\min_{D\in\Sym(2)}q_T(D),
\end{equation*}
where
\begin{equation*}
q_T(D):=|D|^2+\gamma\,\tr^2D-
\delta_0
\left[\sym D\cdot
\left(
\begin{array}{cc}
1 & 2/\pi \\
2/\pi & 1
\end{array}
\right)
-\frac23(1-\gamma)\tr\, D\right].
\end{equation*}
It is easy to see that
\begin{equation*}
\min_{D\in\Sym(2)}q_T(D)
=
q_T\left(
\left[
\begin{array}{cc}
\delta_0/6 & \delta_0/\pi\\
\delta_0/\pi & \delta_0/6
\end{array}
\right]
\right)
=
-\Big(\frac{1+2\gamma}{18}+\frac2{\pi^2}\Big)\delta_0^2,
\end{equation*}
and in turn that
\begin{equation*}
\overline Q_{2,T}(G)=2\mu\bigg[
\frac1{12}\Big(|\sym G|^2+\gamma\,\tr^2G\Big)
+
\frac{2\,\delta_0}{\pi^2}\,
\sym G\cdot{\rm diag}(1,-1)
+
\Big(\frac12-\frac2{\pi}\Big)\delta_0^2
\bigg].
\end{equation*}
Other straightforward computations show that, setting
\begin{equation}
\label{const_T}
\alpha_T\,=\,\frac1{12},
\quad\qquad
\overline A_T\,=\,\frac{12\delta_0}{\pi^2}\,{\rm diag(-1,1)},
\quad\qquad
\beta_T\,=\,\mu\,\Big(\frac{\pi^4-4\pi^2-48}{\pi^4}\Big)\delta_0^2,
\end{equation}
one has
\begin{equation}
\label{barQ_2_T}
\overline Q_{2,T}(G)
\,=\,
\alpha_T\,Q_2[G-\overline A_T]^2+\beta_T,
\qquad\quad
\mbox{for every}\quad G\in\Sym(2).
\end{equation}
To state the result pertaining to the twisted model,
we define the functional
$\mathscr E^{lim}_T:
\mathcal A_{iso}\longrightarrow[0,\infty)$,
where the class 
$\mathcal A_{iso}$ is defined in \eqref{Aiso},
as
\begin{align}
\mathscr E^{lim}_T(y)
&:=
\frac12
\int\limits_{\omega}\overline Q_{2,T}(A_y(x')){\rm d}x'\nonumber\\
&=
\frac{\mu}{12}\int\limits_{\omega}
\left\{\Big|{\rm A}_y(x')-\frac{12\delta_0}{\pi^2}
{\rm diag}\big(-1,1\big)\Big|^2
+\gamma\,
{\rm H}_y^2(x')\right\}\dd x'
+
\frac{\mu\,\delta_0^2}{\pi^4}
\Big(\frac{\pi^4-4\pi^2-48}2\Big)|\omega|.
\label{2D_twist_model}
\end{align}
We recall that $\mu$ and $\gamma$ are the elastic constants
appearing in \eqref{prot_energy} and 
defined in \eqref{def:lambda}, respectively.
As for the splay-bend case, well-known results
of the theory of $\Gamma$-convergence easily
imply the following theorem.
We refer to \eqref{mathscr_E_h} and to the subsequent paragraph
for the notation related to the 3D models.

\begin{theorem}
[Twisted plate model]
\label{main_thm_T}
Under the same assumptions on the family of (rescaled)
loads $\{f_h\}$ as in Theorem \ref{main_thm_SB},
define 
$\mathscr F_T^{lim}:\mathcal A_{iso}\longrightarrow\R$ as
\begin{equation}
\label{2D_total_energy_T}
\mathscr F_T^{lim}(y)
\,:=\,
\mathscr E_T^{lim}(y)
-\int\limits_{\omega} f^{lim}(x')\cdot y(x')\,\dd x',
\end{equation}
where 
$\mathscr E_T^{lim}$ is given by \eqref{2D_twist_model}
and 
$f^{lim}(x')
:=
\int_{-1/2}^{1/2}f(x',x_3)\,\dd x_3,$
for a.e. $x'\in\omega.$
Suppose that $\{y_h\}$ is a low-energy sequence, viz.
\begin{equation*}
\lim_{h\to0}\frac{\mathscr F_T^h(y_h)}{h^2}
\,=\,
\lim_{h\to0}\frac{\inf_{y\in{\rm W}^{1,2}(\Omega,\R^3)}\mathscr F^h_T}{h^2}
\,=:\,
m.
\end{equation*}
Then, up to a subsequence, $y_h\longrightarrow y_T$ in 
${\rm W}^{1,2}(\Om,\R^3)$, where $y_T\in\mathcal A_{iso}$
is a minimiser of the 2D model, that is 
\begin{equation*}
\mathscr F_T^{lim}(y_T)\,=\,\min_{\mathcal A_{iso}}\mathscr F_T^{lim}.
\end{equation*}
Moreover, $m=\mathscr F_T^{lim}(y_T)$.
\end{theorem}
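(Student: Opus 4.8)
The plan is to reduce the statement to the abstract compactness and $\Gamma$-convergence results recalled in Subsection~\ref{rigorous_res}, following verbatim the argument already used for the splay-bend plate in Theorem~\ref{main_thm_SB}; the only structural change is that the two-dimensional density $\overline Q_{2,SB}$, the target $\overline A_{SB}$ and the constant $\beta_{SB}$ get replaced by $\overline Q_{2,T}$, $\overline A_T$ and $\beta_T$ from~\eqref{const_T}. The starting observation is~\eqref{W_h_T}, which writes $W^h_T$ in the form~\eqref{cond_W_h_cruc} with $W_0$ as in~\eqref{nostra_W} and with $B^h_T\to B_T$ in ${\rm L}^{\infty}((-1/2,1/2),\RRR)$; since $W_0$ satisfies Assumption~\ref{assunzione_W_0} (Remark~\ref{rmk_W_0}), Theorems~\ref{compattezza} and~\ref{thm_Schmidt_Gamma} apply to the family $\mathscr E^h_T$, and the limit density~\eqref{barQ2} has already been computed above, in~\eqref{barQ_2_T}, yielding the functional $\mathscr E^{lim}_T$ of~\eqref{2D_twist_model}.

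The first step is to handle the loads and extract a compact subsequence. Since $\int_\Om f_h\,\dd x=0$, replacing each $y_h$ by $y_h$ minus its mean over $\Om$ affects neither $\mathscr F^h_T(y_h)$ nor the statement, so we may assume $\int_\Om y_h\,\dd x=0$. Writing $\mathscr E^h_T(y_h)=\mathscr F^h_T(y_h)+\int_\Om f_h\cdot y_h\,\dd x$, using the coercivity $W_0(F)\ge C\,\dist^2(F,\SO(3))$ (Remark~\ref{rmk_W_0}) and the quantitative rigidity estimate underlying Theorem~\ref{compattezza} to get $\|y_h\|_{{\rm L}^2}\le C\,(1+\sqrt{\mathscr E^h_T(y_h)})$, and recalling that $\|f_h\|_{{\rm L}^2}=h^2\|f_h/h^2\|_{{\rm L}^2}\le Ch^2$ and $\mathscr F^h_T(y_h)\le Ch^2$ along a low-energy sequence, a Young-inequality absorption yields $\mathscr E^h_T(y_h)\le Ch^2$ for all small $h$. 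Theorem~\ref{compattezza} then produces a subsequence with $\na_h y_h\to(\na'y\,|\,\nu)$ strongly in ${\rm L}^2(\Om,\RRR)$, the limit being ${\rm W}^{1,2}$, independent of $x_3$, and $\SO(3)$-valued a.e.; in particular $y\in\mathcal A_{iso}$, and since $\partial_{x_3}y_h=h\,(\partial_{x_3}y_h)/h\to0$ strongly, one upgrades this to $y_h\to y$ strongly in ${\rm W}^{1,2}(\Om,\R^3)$.

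The second step is the $\Gamma$-convergence of the total energies. The load perturbation $y\mapsto-\int_\Om f_h\cdot y\,\dd x$ converges continuously along sequences converging strongly, or even weakly, in ${\rm W}^{1,2}$: if $y_h\to y$, then by Rellich's theorem and weak--strong convergence $\int_\Om (f_h/h^2)\cdot y_h\,\dd x\to\int_\Om f\cdot y\,\dd x=\int_\omega f^{lim}\cdot y\,\dd x'$, the last equality using that the limit $y$ is independent of $x_3$. Hence, by Theorem~\ref{thm_Schmidt_Gamma} and stability of $\Gamma$-convergence under continuously convergent perturbations, $\mathscr F^h_T/h^2$ $\Gamma$-converges, in the strong and the weak ${\rm W}^{1,2}$ topologies, to the functional $\mathscr F^{lim}_T$ of~\eqref{2D_total_energy_T}. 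A minimiser $y_T$ of $\mathscr F^{lim}_T$ over $\mathcal A_{iso}$ exists by the direct method ($\mathcal A_{iso}$ is weakly closed in ${\rm W}^{2,2}$, the bending term in~\eqref{2D_twist_model} is weakly lower semicontinuous and coercive in $\|A_y\|_{{\rm L}^2}$, and the load term is a bounded linear perturbation). Combining this $\Gamma$-convergence with the equicoercivity obtained in the first step, the fundamental theorem of $\Gamma$-convergence gives that, along the extracted subsequence, $y_h\to y_T$ with $\mathscr F^{lim}_T(y_T)=\min_{\mathcal A_{iso}}\mathscr F^{lim}_T$, and moreover $m=\min_{\mathcal A_{iso}}\mathscr F^{lim}_T=\mathscr F^{lim}_T(y_T)$.

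I expect the only delicate point --- and even it is routine rather than genuinely hard --- to be the quantitative ${\rm W}^{1,2}$-bound on $\{y_h\}$ together with the passage from weak to strong convergence in the first step, both of which rely on the geometric rigidity estimate built into the proof of Theorem~\ref{compattezza} in~\cite{Schmidt2007}. Since none of this is sensitive to the particular nematic texture, the argument is word-for-word the one for Theorem~\ref{main_thm_SB}, with $(\overline Q_{2,SB},\overline A_{SB},\beta_{SB})$ replaced throughout by $(\overline Q_{2,T},\overline A_T,\beta_T)$.
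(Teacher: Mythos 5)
Your proposal is correct and follows exactly the route the paper takes: the paper omits a detailed proof, stating only that Theorems~\ref{compattezza} and~\ref{thm_Schmidt_Gamma} together with standard $\Gamma$-convergence theory (including the equicoercivity/normalization step and the continuous convergence of the load perturbation) imply the result, just as for Theorem~\ref{main_thm_SB}. Your write-up simply fills in those standard details; there is no methodological divergence.
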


In the case where the limiting load $f$ is identically zero, 
we have that
$\min_{\mathcal A_{iso}}\mathscr F_T^{lim}
=
\min_{\mathcal A_{iso}}\mathscr E_T^{lim}$
and the minimisers of 
$\mathscr E_{T}^{lim}$
are given by the following lemma.

\begin{lemma}
\label{MinPb}
We have that
\begin{equation}
\label{min_twisted}
\min_{\mathcal A_{iso}}\mathscr E_T^{lim}
\,=\,
\mathscr E_T^{lim}(y_T)
\,=\,
\frac{\mu\,\delta_0^2}{\pi^4}\left[12
\left(\frac{1+2\gamma}{1+\gamma}\right)
+\frac{\pi^4-4\pi^2-48}2\right]|\omega|,
\end{equation}
where $y_T\in\mathcal A_{iso}$
is such that
\begin{equation*}
\mbox{either}\qquad
{\rm A}_{y_T}
\equiv
{\rm diag}\left(-\frac{12\delta_0}{\pi^2(1+\gamma)},0\right)
\qquad\mbox{or}\qquad
{\rm A}_{y_T}
\equiv
{\rm diag}\left(0,\frac{12\delta_0}{\pi^2(1+\gamma)}\right).
\end{equation*}
\end{lemma}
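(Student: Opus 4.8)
The plan is to reduce the minimisation of $\mathscr E^{lim}_T$ over $\mathcal A_{iso}$ to a pointwise, finite-dimensional \emph{constrained} minimisation, the constraint being that a flat isometric immersion has everywhere degenerate second fundamental form. Since $\tr\overline A_T=0$, formula \eqref{2D_twist_model} reads, for $y\in\mathcal A_{iso}$,
\[
\mathscr E^{lim}_T(y)=\frac{\mu}{12}\int\limits_{\omega}\varphi\big(A_y(x')\big)\,\dd x'+\frac{\mu\,\delta_0^2}{\pi^4}\Big(\frac{\pi^4-4\pi^2-48}{2}\Big)|\omega|,\qquad\varphi(G):=\big|G-\overline A_T\big|^2+\gamma\,(\tr G)^2,
\]
where $\overline A_T=(12\delta_0/\pi^2)\,{\rm diag}(-1,1)$, and recall that $A_y(x')\in\Sym(2)$ with $|A_y|\in L^2(\omega)$. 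Because the metric induced on $y(\omega)$ by $y\in\mathcal A_{iso}$ is Euclidean, the Gauss equation (Theorema Egregium) forces the Gaussian curvature $\det A_y$ to vanish a.e.\ in $\omega$ — the classical developability of $W^{2,2}$ isometric immersions of a planar domain. Hence, for every $y\in\mathcal A_{iso}$,
\[
\mathscr E^{lim}_T(y)\ \ge\ \frac{\mu}{12}\,m_*\,|\omega|+\frac{\mu\,\delta_0^2}{\pi^4}\Big(\frac{\pi^4-4\pi^2-48}{2}\Big)|\omega|,\qquad m_*:=\min\big\{\varphi(G):\ G\in\Sym(2),\ \det G=0\big\}.
\]

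The next step is to compute $m_*$ explicitly. A degenerate symmetric $2\times2$ matrix is $G=\kappa\,n\otimes n$ with $\kappa\in\R$ and $n=(\cos\theta,\sin\theta)$; writing $c:=12\delta_0/\pi^2$ one gets $|G|^2=\kappa^2$, $\tr G=\kappa$, $|\overline A_T|^2=2c^2$ and $G\cdot\overline A_T=-c\,\kappa\cos 2\theta$, so that
\[
\varphi(\kappa\,n\otimes n)=(1+\gamma)\,\kappa^2+2c\,\kappa\cos 2\theta+2c^2.
\]
Minimising first in $\kappa$ (optimal value $\kappa=-c\cos 2\theta/(1+\gamma)$) leaves $2c^2-\tfrac{c^2\cos^2 2\theta}{1+\gamma}$, which is minimal exactly when $\cos^2 2\theta=1$, i.e.\ for $\theta\in\{0,\pi/2\}$ and the corresponding $\kappa=\mp c/(1+\gamma)$. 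Therefore the only minimisers of $\varphi$ on $\{\det G=0\}$ are ${\rm diag}(-c/(1+\gamma),0)$ and ${\rm diag}(0,c/(1+\gamma))$, and
\[
m_*=2c^2-\frac{c^2}{1+\gamma}=\frac{1+2\gamma}{1+\gamma}\,c^2=\frac{144\,\delta_0^2}{\pi^4}\cdot\frac{1+2\gamma}{1+\gamma};
\]
inserting this into the lower bound of the previous paragraph reproduces precisely the right-hand side of \eqref{min_twisted}.

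It remains to exhibit an admissible $y_T$ attaining equality. Put $\rho:=\pi^2(1+\gamma)/(12\delta_0)>0$ and let $y_T(x_1,x_2):=\big(\rho\sin(x_1/\rho),\,-x_2,\,\rho\cos(x_1/\rho)\big)$, a portion of the circular cylinder of radius $\rho$ with axis parallel to $\mathsf e_2$, parametrised by arc length in $x_1$. A direct computation gives $(\na'y_T)^T\na'y_T={\rm I}_2$, so $y_T\in\mathcal A_{iso}$ (it is smooth, hence in $W^{2,2}(\omega,\R^3)$), and with $\nu=\pa_{x_1}y_T\wedge\pa_{x_2}y_T$ one finds $A_{y_T}\equiv{\rm diag}(-1/\rho,0)={\rm diag}\big(-12\delta_0/(\pi^2(1+\gamma)),\,0\big)$; the matrix ${\rm diag}\big(0,\,12\delta_0/(\pi^2(1+\gamma))\big)$ is produced in the same way by a cylinder with axis parallel to $\mathsf e_1$. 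For either choice $\varphi(A_{y_T})=m_*$ a.e., so $\mathscr E^{lim}_T(y_T)$ equals the lower bound and \eqref{min_twisted} follows, while any minimiser of $\mathscr E^{lim}_T$ on $\mathcal A_{iso}$ must have $A_y(x')$ equal a.e.\ to one of these two matrices, since they are the only pointwise minimisers of $\varphi$ on $\{\det G=0\}$. The one non-elementary ingredient is exactly the identity $\det A_y=0$: it is what prevents $\varphi$ from attaining its unconstrained minimum $0$ (reached at $G=\overline A_T$, where $\det\overline A_T=-c^2<0$), and hence the reason why the twisted plate cannot relax all its elastic energy and why it admits two oppositely curved minimal-energy shapes; everything else is the two-variable computation above together with the explicit cylinder.
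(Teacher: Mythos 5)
Your proof is correct and follows the same overall strategy as the paper's: reduce to a pointwise constrained minimisation over degenerate symmetric $2\times2$ matrices (using that $\det A_y=0$ a.e. for $y\in\mathcal A_{iso}$), solve this finite-dimensional problem, and exhibit explicit cylinders attaining the bound. The only notable difference is how you carry out the finite-dimensional step: you parametrise the rank-one matrices directly as $G=\kappa\,n\otimes n$ with $n=(\cos\theta,\sin\theta)$, obtaining the clean expression $\varphi=(1+\gamma)\kappa^2+2c\kappa\cos 2\theta+2c^2$ and minimising sequentially in $\kappa$ and $\theta$, whereas the paper parametrises $A$ by its entries $(\xi,\eta,\zeta)$ subject to $\xi\eta=\zeta^2$ and invokes Lagrange multipliers. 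Your rank-one parametrisation is somewhat more elementary and makes the structure of the answer (two oppositely curved minimisers corresponding to $\theta=0$ and $\theta=\pi/2$) more transparent, but the substance of the argument is the same.
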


\begin{proof}
Clearly, a deformation $y\in\mathcal A_{iso}$ which
minimises the integrand of $\mathscr E_{T}^{lim}(y)$
pointwise is a minimiser of $\mathscr E_{T}^{lim}$ over the 
class $\mathcal A_{iso}$.
Seeking for such a minimiser and since $\det{\rm A}_y(x')=0$
a.e. in $\omega$ whenever $y\in\mathcal A_{iso}$, 
we consider the problem
\begin{equation}
\label{problemi_minimo}
\min_{A\in\Sym(2):\det A=0}
\left\{
\big|A-
{\rm diag}(\alpha,-\alpha)
\big|^2
+\gamma\,
\tr^2A
\right\}
\,=\,
\min_{\xi\eta=\zeta^2}
\Big\{
(1+\gamma)(\xi+\eta)^2
+2\,\alpha\,(\alpha-\xi+\eta)
\Big\},
\end{equation}
where we have set $\alpha:=-12\delta_0/\pi^2$ and used the notation 
$\left(
\begin{array}{cc}
\xi & \zeta \\ 
\zeta & \eta\end{array}
\right)
$
to represent an arbitrary matrix $A\in{\rm Sym}(2)$.
Setting
\begin{equation*}
f(\xi,\eta)
\,:=\,
(1+\gamma)(\xi+\eta)^2
+2\,\alpha\,(\alpha-\xi+\eta)
\qquad\quad\mbox{and}\quad\qquad
g_\zeta(\xi,\eta)
\,:=\,
\xi\eta-\zeta^2,
\end{equation*}
we have that the previous minimisation problem 
can be rewritten as
$\min_{\zeta\in\R}\min_{g_\zeta(\xi,\eta)=0}f(\xi,\eta)$.
Using the method of Lagrange multipliers it is then easy to check that
\begin{equation*}
\min_{\zeta\in\R}\min_{g_\zeta(\xi,\eta)=0}f(\xi,\eta)
\,=\,
\min_{\zeta\in\R}f(\xi_\zeta^+,\eta_\zeta^+)
\,=\,
\min_{\zeta\in\R}f(\xi_\zeta^-,\eta_\zeta^-)
\,=\,
f(\xi_0^+,\eta_0^+)
\,=\,
f(\xi_0^-,\eta_0^-)
\,=\,
\alpha^2\left(\frac{1+2\gamma}{1+\gamma}\right),
\end{equation*} 
where
\begin{equation*}
\xi_\zeta^{\pm}:=
\frac12\left(\frac{\alpha}{1+\gamma}\right)
\pm
\frac12\sqrt{
\left(\frac{\alpha}{1+\gamma}\right)^2+4\zeta^2
},
\qquad\quad
\eta_\zeta^{\pm}:=
-\frac12\left(\frac{\alpha}{1+\gamma}\right)
\pm
\frac12\sqrt{
\left(\frac{\alpha}{1+\gamma}\right)^2+4\zeta^2.
}
\end{equation*}
Correspondingly, we have that the solutions
to the minimum problem on the left hand side in 
\eqref{problemi_minimo} are
\begin{equation*}
A^+
:=\,
{\rm diag}\left(\xi_0^+,\eta_0^+\right)
=
{\rm diag}\left(\frac{\alpha}{1+\gamma},0\right)
\qquad\mbox{and}\qquad
A^-
:=\,
{\rm diag}\left(\xi_0^-,\eta_0^-\right)
=
{\rm diag}\left(0,-\frac{\alpha}{1+\gamma}\right).
\end{equation*}
Now, since there exists $y\in\mathcal A_{iso}$ such that
${\rm A}_y\equiv A^+$ or ${\rm A}_y\equiv A^-$
(this corresponds to $y(\omega)$ being locally isometric
to a cylinder), we have obtained that
\begin{equation*}
\min_{y\in\mathcal A_{iso}}
\int\limits_{\omega}
\left\{\Big|{\rm A}_y(x')-\frac{12\delta_0}{\pi^2}
{\rm diag}\big(-1,1\big)\Big|^2
+\gamma\,
{\rm H}_y^2(x')\right\}\dd x'
\,=\,
|\omega|
\min_{A\in\Sym(2):\det A=0}
\left\{
\big|A-
{\rm diag}(\alpha,-\alpha)
\big|^2
+\gamma\,
\tr^2A
\right\},
\end{equation*}
and in turn that
\begin{equation*}
\min_{\mathcal A_{iso}}\mathscr E_T^{lim}
\,=\,
\frac{\mu}{12}\,|\omega|\,f(\xi_0^+,\eta_0^+)
+
\frac{\mu\,\delta_0^2}{\pi^4}
\left(\frac{\pi^4-4\pi^2-48}2\right)|\omega|\\
\,=\,
\frac{\mu}{12}\,|\omega|\,
\alpha^2\left(\frac{1+2\gamma}{1+\gamma}\right)
+
\frac{\mu\,\delta_0^2}{\pi^4}
\left(\frac{\pi^4-4\pi^2-48}2\right)|\omega|.
\end{equation*}
Substituting in the last expression the definition
of $\alpha$ gives \eqref{min_twisted}.

\end{proof}

Similarly to formula \eqref{phys_stat_splay},
the limiting plate theory for the twisted case 
can be expressed in terms of physical parameters by
\begin{multline}
\label{phys_stat_twist}
\hat{\mathscr F}_T^{h_0}(v_{h_0})
\,\cong\,
\frac{\mu\,h_0^3}{12}\int\limits_{\omega}
\left\{\Big|{\rm A}_y(x')-\frac{12\delta_0}{\pi^2}
{\rm diag}\big(-1,1\big)\Big|^2
+\gamma\,
{\rm H}_y^2(x')\right\}\dd x'\\
+
\frac{\mu\,h_0^3\delta_0^2}{\pi^4}
\Big(\frac{\pi^4-4\pi^2-48}2\Big)|\omega|
-h_0^3\int\limits_{\omega} f^{lim}(x')\cdot y(x')\,\dd x',
\end{multline}
for a given small thickness $h_0$,
where the approximate identity holds 
modulo terms of order higher than $3$ in $h_0$,
and where 
$y\in\mathcal A_{iso}$ and
$v_{h_0}$ are a minimiser of the 2D model 
\eqref{2D_twist_model}--\eqref{2D_total_energy_T} 
and a low-energy (physical) deformation,
respectively.

\smallskip
To put into perspective the two plate models
which we have derived for splay-bend and twisted nematic elastomer 
thin sheets, we conclude this section with a comparison with 
the case where a limiting plate model originates from
a three-dimensional spontaneous strain distribution
which is simpler, i.e., quadratic in the thickness variable. 
We see that, as expected, when the spontaneous strains
are kinematically compatible, 
the limiting two-dimensional stored energy 
functional is minimised at the value zero.   

\begin{remark}[The linear/quadratic case]
\upshape

Consider a system in the (physical) reference configuration $\Om_h$
endowed with a stored energy density $w^h$ of the form \eqref{prot_energy},
with the spontaneous strain distribution given by
\begin{equation}
\label{derivazione_quadrat}
\bar c_h(z_3)=
{\rm I}
+\delta_0\,z_3\,P+\eta_0\,z_3^2\,R,
\end{equation}
for some constant and dimensionless 
symmetric matrices $P$ and $R$. 
Moreover, $\delta_0$ and $\eta_0$ are 
real constants whose dimensions are inverse length and 
square of inverse length, respectively.
Let us denote by $\overline C_h(x_3)$ 
the rescaled spontaneous strain 
\begin{equation*}
\overline C_h(x_3):=\bar c_h(hx_3)
=
{\rm I}
+\delta_0\,hx_3\,P+\eta_0\,(hx_3)^2\,R,
\end{equation*}
and by $(x_3,F)\mapsto W^h(x_3,F)$ 
the corresponding stored energy density.  
Defining $\overline U_h:=\sqrt{\overline C_h}$, we have
$
W^h(x_3,F)=W_0\big(F\overline U_h^{-1}(x_3)\big)
=
W_0\big(F(
{\rm I}
+h\,B^h(x_3))\big)$,
with
$W_0$ defined as in \eqref{nostra_W} and
$B^h(x_3):=(-\delta_0\,x_3\,P/2+r_h/h)$
Since 
$B^h\longrightarrow B$ in ${\rm L}^{\infty}((-1/2,1/2),\R^{3\times3})$,
with $B(x_3):=-\delta_0\,x_3\,P/2$, then
Theorems \ref{compattezza} and \ref{thm_Schmidt_Gamma}
tell us that the limiting two-dimensional plate
model is described by the energy functional
\begin{equation}
\label{funzionale_limite_lineare}
\mathcal A_{iso}\ni
y\mapsto
\frac1{24}\int\limits_{\omega}
Q_2\left(A_y(x')-\frac{\delta_0}2\check P\right){\rm d}x.
\end{equation}
Indeed, for every $G\in\R^{2\times 2}$ we have
\begin{align*}
\overline Q_2(G)
&=
2\mu
\min_{D\in\Sym(2)}\int\limits_{-1/2}^{1/2}
\!\!\left\{
\Big|D+t\,\sym G-\frac{\delta_0}2t\check P\Big|^2
+
\gamma\,
\tr^2
\left(D+t\,G-\frac{\delta_0}2t\check P\right)
\right\}{\rm d}t
\\
&=
2\mu
\min_{D\in\Sym(2)}\int\limits_{-1/2}^{1/2}
\!\!\left\{
|D|^2+\gamma\,\tr^2D+t^2\left[\,
\Big|\sym G-\frac{\delta_0}2\check P\Big|^2
+
\gamma\,
\tr^2
\Big(G-\frac{\delta_0}2\check P\Big)
\right]
\right\}{\rm d}t\\
&=
\int\limits_{-1/2}^{1/2}
t^2\,Q_2\Big(G-\frac{\delta_0}2\check P\Big){\rm d}t
=
\frac1{12}\,
Q_2\Big(G-\frac{\delta_0}2\check P\Big).
\end{align*}
Note that the coefficient multiplying the purely quadratic
term $z_3^2$ in \eqref{derivazione_quadrat} does not play any role.
Referring to Subsection \ref{comp_quadrat} for a discussion
on the kinematic compatibility of \eqref{derivazione_quadrat},
we observe that in each of the four cases where
\eqref{derivazione_quadrat} is kinematically compatible, 
listed in the mentioned subsection,
we have that $\check P$ has at least one zero eigenvalue.
Hence, we have that the functional \eqref{funzionale_limite_lineare}
can be minimised to zero by a deformation $y\in\mathcal A_{iso}$
such that $y(\omega)$ is locally isometric to a plane
when both the eigenvalues of $\hat P$ are zero, 
and such that $y(\omega)$ is locally a cylinder
in all the other cases.
\end{remark}


\section{Energy minimising shapes under zero loads}
\label{sec_4}


The aim of this section is to give an explicit representation of the
minimal energy configurations of the
nematic sheets and to gain some physical insight on their
behaviour.
To do this, we start by characterising the deformations $y$
realising the condition $A_y\equiv{\rm diag}(k,0)$,
for some constant $k\neq0$,
under the constraint of being isometries.
More explicitly, we look for a (smooth) deformation
$y:\omega\to\R^3$ such that
\begin{equation}
\label{cond_realised}
(\na'y)^T\na'y={\rm I}_2,
\qquad\qquad\qquad
{\rm A_y}=
\left(
\begin{array}{cc}
k & 0\\
0 & 0
\end{array}
\right),
\end{equation}
or, equivalently, such that
\begin{equation*}
\left(
\begin{array}{cc}
|\pa_1y|^2 & \pa_1y\cdot\pa_2y\\
\pa_1y\cdot\pa_2y & |\pa_2y|^2
\end{array}
\right)
=
\left(
\begin{array}{cc}
1 & 0\\
0 & 1
\end{array}
\right),
\qquad\qquad
\left(
\begin{array}{cc}
\pa_1 y\cdot\pa_1\nu & \pa_1 y\cdot\pa_2\nu\\
\pa_2 y\cdot\pa_1\nu & \pa_2 y\cdot\pa_2\nu
\end{array}
\right)
=
\left(
\begin{array}{cc}
k & 0\\
0 & 0
\end{array}
\right),
\end{equation*}
where $\nu:=\pa_1y\wedge\pa_2y$.
It is easy to check that deformations
$y$ satisfying these conditions are defined 
up to arbitrary translations and superposed rotations.
Hence, we will use the normalising conditions
\begin{equation}
\label{extra_cond}
y(0,0)=0,
\qquad\qquad
\na y(0,0)=(\mathsf e_1|\mathsf e_2),
\end{equation} 
to construct one specific representative.

Note that from the condition $\pa_2y\cdot\pa_1\nu=0$
and from the identity $(\pa_1\pa_2y)\cdot\nu+\pa_2y\cdot\pa_1\nu=0$,
obtained by differentiating $\pa_2y\cdot\nu=0$ with respect to $x_1$,
one gets $(\pa_1\pa_2y)\cdot\nu=0$. Moreover,
by differentiating the conditions $|\pa_1y|^2=1$ and $|\pa_2y|^2=1$
with respect to $x_2$ and $x_1$, respectively, we obtain
\[
(\pa_1\pa_2y)\cdot\pa_1y=(\pa_1\pa_2y)\cdot\pa_2y=0.
\]  
Hence, we have that $\pa_1\pa_2y=0$ in $\omega$.
Similarly, using the condition $\pa_2y\cdot\pa_2\nu=0$ 
and suitably differentiating the identities $\pa_2y\cdot\nu=0$,
$|\pa_2y|^2=1$, and $\pa_1y\cdot\pa_2y=0$, 
one gets that $\pa_2\pa_2y=0$ in $\omega$. 
This fact, coupled with the information that the mixed derivatives of $y$
vanish, says that $y$ must be of the form
\begin{equation*}
y(x_1,x_2)
\,=\,
x_2\,\mathsf c+w(x_1),
\end{equation*}
for some $\mathsf c\in\R^3$ and some smooth
$w:\R\to\R^3$
such that $|\mathsf c|=|\dot w(x_1)|=1$ and $\mathsf c\cdot\dot w(x_1)=0$
for every $x_1$,
where we use the notation $\dot w=\pa_1w$.
Observe that
$\na'y=(\dot w\,|\,\mathsf c)$, 
that $\na'\nu=(\dot\nu\,|\,0)$, with
\[
\nu
\,:=\,
\pa_1y\wedge\pa_2y
\,=\,
\left(
\begin{array}{c}
\mathsf c_3\dot w_2-\mathsf c_2\dot w_3\\
\mathsf c_1\dot w_3-\mathsf c_3\dot w_1\\
\mathsf c_2\dot w_1-\mathsf c_1\dot w_2
\end{array}
\right),
\]
and that condition $\pa_1 y\cdot\pa_2\nu=0$ in now
automatically satisfied.
Note also that we have not exploited yet the information
that $\pa_1 y\cdot\pa_1\nu=\dot w\cdot\dot\nu=k$, 
which is going to determine the explicit expression of $w$.  
More precisely, the function $w$ has to satisfy the
following system of equations:
\[
\begin{cases}
|\dot w|^2=1,\\
\dot w\cdot\mathsf c=0,\\
\dot w\cdot\dot\nu=k.
\end{cases}
\]
To proceed, we set $f:=\dot w$ and 
choose $\mathsf c=\mathsf e_2$, 
so that the above system reduces to 
\[
\begin{cases}
f_1^2+f_3^2=1,\\
f_2=0,\\
-f_1\dot f_3+\dot f_1f_3=k.
\end{cases}
\]
Setting 
$
f_1(x_1)=\cos\big(\theta(x_1)\big)
$
and
$
f_3(x_1)=\sin\big(\theta(x_1)\big),
$
we have that the first equation is satisfied, 
while the third equation reduces to $\dot\theta(x_1)=-k$,
which yields $\theta(x_1)=-kx_1+\xi$, for some $\xi\in\R$.
In the end, we have obtained that
\[
f
\,=\,
\left(
\begin{array}{c}
f_1\\
f_2\\
f_3
\end{array}
\right)
\,=\,
\left(
\begin{array}{c}
\cos(-kx_1+\xi)\\
0\\
\sin(-kx_1+\xi)\\
\end{array}
\right)
\qquad\Rightarrow\qquad
w
\,=\,
\int^{x_1}f
\,=\,
\left(
\begin{array}{c}
-\frac1k\sin(-kx_1+\xi)+\mathsf m_1\\
\frac1k\cos(-kx_1+\xi)+\mathsf m_2\\
\mathsf m_3
\end{array}
\right)\!,
\]
for some constant $\xi\in\R$, $\mathsf m\in\R^3$.
All in all, we have that if $y:\omega\to\R^3$ is a smooth deformation 
satisfying \eqref{cond_realised}, then $\pa_2y=\mathsf c$
for some constant $\mathsf c\in\R^3$ of unit length.
Under the normalising assumption that $\mathsf c=\mathsf e_2$,
the deformation $y$ has the following expression
\begin{equation}
\label{espressione_def_min}
y(x_1,x_2)
\,=\,
x_2\,\mathsf e_2+
\left(
\begin{array}{c}
-\frac1k\sin(-kx_1+\xi)\\
0\\
\frac1k\cos(-kx_1+\xi)
\end{array}
\right)
+
\mathsf m,
\end{equation}
for some constants $\xi\in\R$ and $\mathsf m\in\R^3$.
We can now choose $\xi=0$ and $m=(0,0,-1/k)^T$, so that
$\pa_1y(0,0)=\mathsf e_1$ and $y(0,0)=0$.
Summarizing, the deformation
\begin{equation}
\label{def_min_uno}
y(x_1,x_2)
\,=\,
\left(
\frac1k\sin(kx_1),
x_2,
\frac1k\big(\cos(kx_1)-1\big)
\right)^T
\end{equation}
fulfills condition \eqref{cond_realised}
and the extra conditions \eqref{extra_cond}.

If we now look for some isometric deformation
$\tilde y$ realising the condition
$A_y\equiv{\rm diag}(0,k)$,
for some constant $k\neq0$,
namely, such that
\begin{equation}
\label{cond_realised_tilde}
(\na'\tilde y)^T\na'\tilde y={\rm I}_2,
\qquad\qquad\qquad
{\rm A_{\tilde y}}=
\left(
\begin{array}{cc}
0 & 0\\
0 & k
\end{array}
\right)\!,
\end{equation}
we can proceed similarly to the above
and check that it must be of the form 
$
\tilde y(x_1,x_2)
=
x_1\,\tilde{\mathsf c}+\tilde w(x_2),
$
for some $\tilde{\mathsf c}\in\R^3$ and some smooth
$\tilde w:\R\to\R^3$
such that $|\tilde{\mathsf c}|
=|\pa_2\tilde w(x_2)|=1$ and 
$\tilde{\mathsf c}\cdot\pa_2\tilde w(x_2)=0$
for every $x_2$.
Choosing $\tilde{\mathsf c}=\mathsf e_1$,
we easily arrive to the expression
\begin{equation*}
\tilde y(x_1,x_2)
\,=\,
x_1\,\mathsf e_1+
\left(
\begin{array}{c}
0\\
\frac1k\sin(kx_2+\xi)\\
\frac1k\cos(kx_2+\xi)
\end{array}
\right)
+
\tilde{\mathsf m}.
\end{equation*}
Choosing $\xi=0$ and $\tilde{\mathsf m}=(0,0,-1/k)^T$,
we obtain the deformation
\begin{equation}
\label{def_min_due}
\tilde y(x_1,x_2)
\,=\,
\left(x_1,
\frac1k\sin(kx_2),
\frac1k\big(\cos(kx_2)-1\big)
\right)^T\!\!\!,
\end{equation}
fulfilling  conditions
\eqref{cond_realised_tilde}
and the same extra conditions as $y$ 
in \eqref{extra_cond}. 


\smallskip
The spontaneous curvature exhibited by  minimal energy configurations in 
twisted nematic elastomer sheets cannot be read off directly from the target curvature tensor. This is because the two-dimensional bending energy \eqref{2D_twist_model}
cannot be minimised by minimising the integrand to zero, due to a geometric obstruction (there is no isometry of the plane with non-vanishing Gaussian curvature). 
This curvature is instead obtained by solving a minimisation problem, 
as shown in Lemma \ref{MinPb}.
This lemma, coupled with the above discussion,
says that the deformation $y$
defined as in \eqref{def_min_uno} with
$k=-\frac{12\delta_0}{\pi^2(1+\gamma)}$
(a portion of a cylinder
with axis parallel to the image through $y$
of the line spanned by $\mathsf e_2$,
and with radius $\bar\rho:=\pi^2(1+\gamma)/(12\delta_0)$)
and the deformation $\tilde y$
defined as in \eqref{def_min_due} with
$k=\frac{12\delta_0}{\pi^2(1+\gamma)}$
(in this case,
a portion of a cylinder
with axis parallel to the image through $\tilde y$
of the line spanned by $\mathsf e_1$,
and with the same radius $\bar\rho$)
both realise the minimum for the 2D twisted energy functional.
Nematic sheets with twisted texture are therefore \emph{bistable} under zero loads, see Figure \ref{fig:bistable}.

\begin{figure}[htbp]
\begin{center}
\includegraphics[width=7cm]{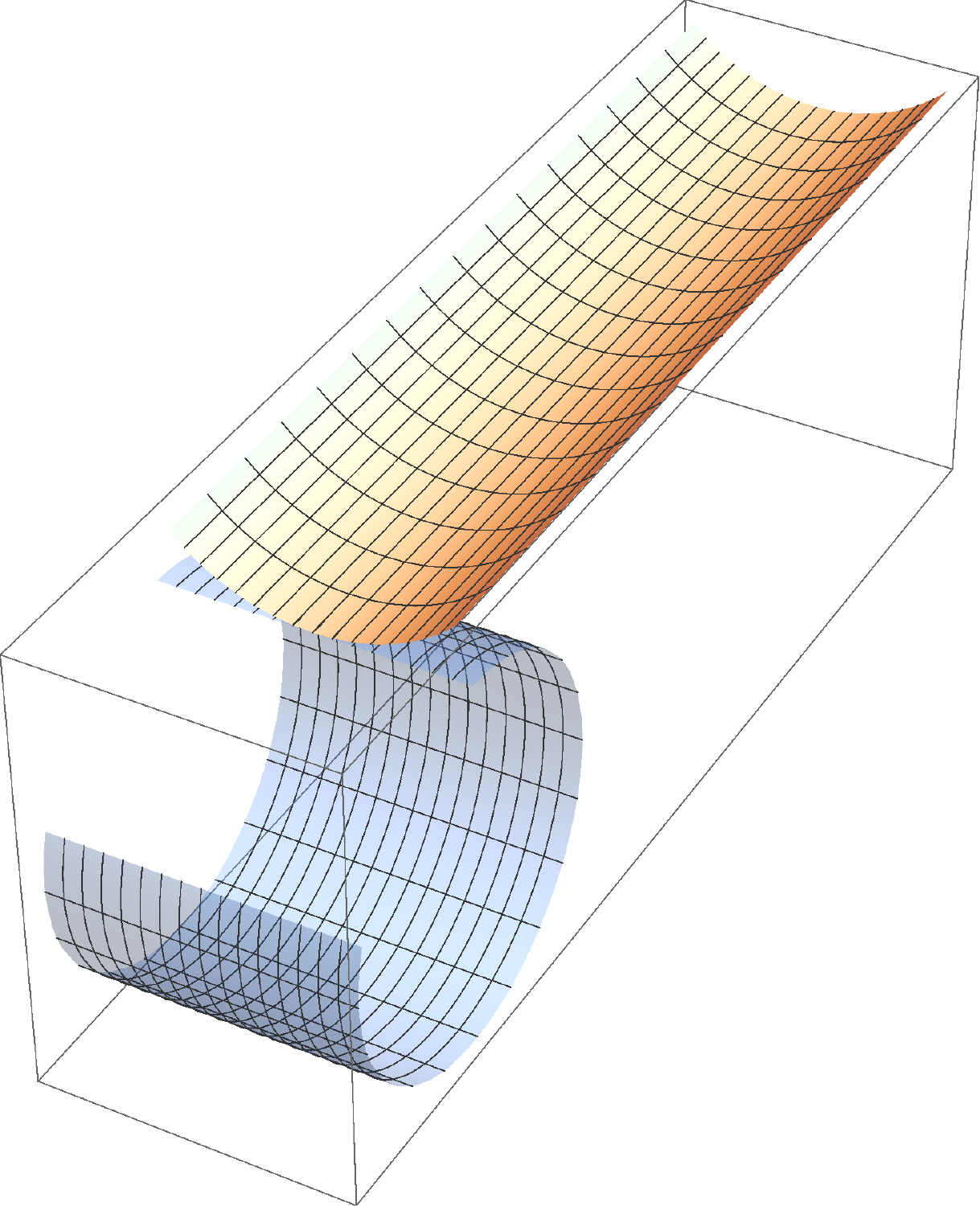}
\end{center}
\caption{Plot of minimal energy
configurations for the free-energy functional
$\mathscr E^{lim}_T$ defined in 
\eqref{2D_twist_model},
arising from a twisted texture of the nematic director.
Under zero loads, the system is stable in each of the two configurations, hence \emph{bistable}.} 
\label{fig:bistable}
\end{figure}

In the case of splay-bend textures,
the curvature  giving minimal energy can be predicted by simply 
reading it off from the target curvature tensor of the two-dimensional model \eqref{phys_stat_splay}
and therefore only deformations of type
\eqref{def_min_uno} with $k=12\delta_0/\pi^2$
(a portion of a cylinder
with axis parallel to the image through $y$
of the line spanned by $\mathsf e_2$,
and with radius $\pi^2/(12\delta_0)$)
are minimal energy states.
This means that Gaussian curvature is suppressed in the splay-bend 
as well as in the twisted case, 
in the sense that
the configurations exhibited by  elastomer thin sheets in the absence of applied loads will be portions of cylindrical surfaces (with zero Gaussian curvature, as 
predicted in  \cite{War_Mod_Cor_1,War_Mod_Cor_2} and observed experimentally in  \cite{hybrid,Sawa2011,Urayama2013}).
In both cases, these configurations carry non-zero residual stresses. In the twisted case, there will be also non-zero residual internal bending moments, due to the additional \emph{frustration} caused by the non-attainability of the target curvature. In the splay-bend case the target curvature is attained, the bending energy is minimised to zero, and no residual moments arise.


\smallskip
It is worth comparing the case of twist and splay-bend textures 
with a different scenario, in which
the nematic director is kept constant along the 
thickness of the thin sheet,
whereas the spontaneous strain \eqref{nem_tens}
varies along the thickness through the magnitude parameter $a$.
To the best of our knowledge, a system with these features has not yet been synthesized in a laboratory. 
At least in principle, this should be possible by realising a film 
with uniform alignment of the director perpendicular to the 
mid-suface (direction $\mathsf e_3$), 
and variable degree of order along the thickness ($x_3$ coordinate).

Using the notation of Subsection \ref{subsec:three_dim_model},
let us suppose that the nematic director
$n^h(z_3)$ is now constant, and equal to some $\mathsf n\in\Sph^2$,
and that the (constant) parameter
$a_h$ in \eqref{a_h} is now given by
\begin{equation*}
\bar a_h(z_3)
\,:=\,
1+\frac{\alpha_0}{h_0}z_3,
\qquad\qquad
z_3\in(-h/2,h/2).
\end{equation*}
The (physical) spontaneous strain
of this system is therefore defined as
\begin{equation*}
\bar c_h(z_3)
\,:=\,
\bar a_h^{2/3}(z_3)\,\mathsf n\otimes\mathsf n
+\bar a_h^{-1/3}(z_3)\big({\rm I}-\mathsf n\otimes\mathsf n\big).
\end{equation*}
Modelling the system using again the prototypical
energy density \eqref{prot_energy},
as in Subsection \ref{3D_rescaled}
we can define the rescaled energy densities $W_h(x_3,\cdot)$,
$x_3\in(-1/2,1/2)$,
characterised by the (rescaled) spontaneous strains
\begin{align*}
\overline C_h(x_3)
\,:=\,
\bar c_h(hx_3)
&
\,=\,\Big(1+\frac{\alpha_0}{h_0}h\,x_3\Big)^{2/3}
\!\mathsf n\otimes\mathsf n
+
\Big(1+\frac{\alpha_0}{h_0}h\,x_3\Big)^{-1/3}
\big({\rm I}-\mathsf n\otimes\mathsf n\big)\nonumber\\
&
\,=\,{\rm I}-2\,h\,B(x_3)+R^h(x_3),
\qquad\quad
B(x_3)
\,:=\,
\frac{x_3}2\frac{\alpha_0}{h_0}
\left(\frac{\rm I}3-\mathsf n\otimes\mathsf n\right),
\end{align*}
where $\|R^h\|_{\infty}=o(h)$.
Proceeding as in Subsection \ref{nostra_derivazione},
we obtain a limit 2D model 
whose free-energy functional is given by
\begin{equation}
\label{funzionale_Sharon}
\mathscr E^{lim}(y)
\,=\,
\frac12
\int\limits_{\omega}
\overline Q_2(A_y(x'))
{\rm d}x'
\,=\,
\frac1{24}
\int\limits_{\omega}
Q_2\left(A_y(x')-\check M\right)
{\rm d}x',
\end{equation}
for every $y\in\mathcal A_{iso}$.
Here, the $2{\times}2$ symmetric matrix $\check M$
is given by the formula
\begin{equation*}
\check M
\,=\,
\frac12\frac{\alpha_0}{h_0}
\left[(\mathsf n\otimes\mathsf n)^{\check{}}-
\frac{{\rm I}_2}3\right],
\end{equation*}
and 
$(\mathsf n\otimes\mathsf n)^{\check{}}$
is the $2{\times}2$ upper left part of 
$\mathsf n\otimes\mathsf n$.
In the case where  
$\mathsf n=\mathsf e_3$, 
the spontaneous curvature tensor $\check M$
reduces to 
\begin{equation}
\label{funzionale_Sharon_2}
\check M
\,=\,
\left(
\begin{array}{ccc}
m_0 & 0\\
0 & m_0
\end{array}
\right),
\qquad\qquad
m_0\,:=\,
-\frac{\alpha_0}{6h_0}.
\end{equation}
Note that the Gaussian curvature
associated with $\check M$ is positive.
However, as for the twisted case, 
where the spontaneous Gaussian curvature is negative,
the observable minimal energy
configurations will always exhibit zero Gaussian curvature
(see Lemma \ref{MinPb}),
because of the isometry constraint
they are subjected to.
More precisely, 
some calculations show that every isometric deformation
$\bar y\in\mathcal A_{iso}$
such that $A_{\bar y}\equiv\bar A_+(s)$ 
or $A_{\bar y}\equiv\bar A_-(s)$ for some
$s\in[-\bar k/2,\bar k/2]$,
with
\begin{equation}
\label{bar_k}
\bar A_\pm(s)
\,:=\,
\left(
\begin{array}{ccc}
\frac{\bar k\pm\sqrt{\bar k^2-4s^2}}2 & s\\
s & \frac{\bar k\mp\sqrt{\bar k^2-4s^2}}2
\end{array}
\right)\!,
\qquad\qquad\quad
\bar k
\,:=\,
m_0\Big(\frac{1+2\gamma}{1+\gamma}\Big),
\end{equation}
is such that
\[
\mathscr E^{lim}(\bar y)
\,=\,
\min \mathscr E^{lim}
\,=\,
\frac{\mu}{12}\,m_0^2\,\Big(\frac{1+2\gamma}{1+\gamma}\Big)
|\omega|,
\]
where the expression of $\gamma$ in terms
of the 3D parameters is given in formula 
\eqref{def:lambda}.
Note that
$\bar A_-(-\bar k/2)=\bar A_+(-\bar k/2)$ and 
$\bar A_-(\bar k/2)=\bar A_+(\bar k/2)$,
whereas
$\bar A_-(s_1)\neq\bar A_+(s_2)$
for every $s_1,s_2\in(-\bar k/2,\bar k/2)$.
Note also that the eigenvalues of $\bar A_+(s)$ and $\bar A_-(s)$ are
always $\bar k$ and $0$ for every $s\in[-\bar k/2,\bar k/2]$,
and that 
\begin{equation}
\label{bar_A_+_e_-}
\bar A_+(s)
\,=\,
R_+(s)
\left(
\begin{array}{ccc}
\bar k & 0\\
0 & 0
\end{array}
\right)
R_+(s)^T,
\qquad\qquad
\bar A_-(s)
\,=\,
R_-(s)
\left(
\begin{array}{ccc}
0 & 0\\
0 & \bar k
\end{array}
\right)
R_-(s)^T,
\end{equation}
where the columns of the rotation matrices 
$R_+(s)$ and $R_-(s)$ are the eigenvectors corresponding
to $\bar k$ and $0$ and to $0$ and $\bar k$,
respectively. The explicit expressions of 
$R_+(s)$ and $R_-(s)$ are the following:
\begin{equation}
\label{rotazione_1}
R_\pm(s)
\,=\,
\sqrt{\frac{\bar k+\sqrt{\bar k^2-4s^2}}{2\bar k}}
\left(
\begin{array}{ccc}
1 & \mp\frac{2s}{\bar k+\sqrt{\bar k^2-4s^2}}\\
\pm\frac{2s}{\bar k+\sqrt{\bar k^2-4s^2}} & 1
\end{array}
\right)\!,
\end{equation}
In particular, for $\bar A_+$, 
the directions corresponding to the eigenvalue $0$ 
are given, respectively, 
by the vector $(1,1)$ in the case $s=-\bar k/2$,
by $(0,1)$ in the case $s=0$,
and by $(1,-1)$ in the case $s=\bar k/2$.
For $\bar A_s$, 
the directions corresponding to the eigenvalue $0$ 
are given, respectively, 
by the vector $(1,1)$ in the case $s=-\bar k/2$,
by $(1,0)$ in the case $s=0$,
and by $(1,-1)$ in the case $s=\bar k/2$.
Therefore, through the matrices $\bar A_+$ and 
$\bar A_-$ all the possible directions corresponding
to the eigenvalue $0$ (and in turn
all the corresponding orthogonal eigenspaces 
corresponding to the eigenvector $\bar k$)
are represented.
All in all, we have that
\[
\mathscr A\,:=\,
\left\{\bar A_+(s):s\in
\left[-\frac{\bar k}2,\frac{\bar k}2\right]
\right\}
\cup
\left\{\bar A_-(s):s\in
\left[-\frac{\bar k}2,\frac{\bar k}2\right]
\right\}
\,=\,
\Big\{R\,{\rm diag}(\bar k,0)\,R^T: R\in{\rm SO}(2)\Big\}
\] 

From the discussion leading to
expression \eqref{espressione_def_min}, we have that, 
given $\bar\rho>0$, 
the deformation defined as 
\[
y(x_1,x_2)
\,:=\,
\left(\bar\rho\sin\Big(\frac{x_1}{\bar\rho}\Big),
x_2,
\bar\rho\cos\Big(\frac{x_1}{\bar\rho}\Big)\right)^T
\]
is an isometry such that
$A_y\equiv{\rm diag}(1/\bar\rho,0)$.
Moreover, we have that
\begin{equation}
\label{norm}
y(0,0)\,=\,(0,0,\bar\rho)^T,
\qquad\qquad
\na y(0,0)=(\mathsf e_1|\mathsf e_2).
\end{equation}
Consider the rotation matrices 
\[
R_{\alpha}
\,:=\,
\left(
\begin{array}{ccc}
\sin\alpha & -\cos\alpha & 0\\
\cos\alpha & \sin\alpha & 0\\
0 & 0 & 1
\end{array}
\right),
\qquad\qquad
\check R_{\alpha}
\,:=\,
\left(
\begin{array}{cc}
\sin\alpha & -\cos\alpha\\
\cos\alpha & \sin\alpha
\end{array}
\right),
\]
and note that $\check R_{\alpha}$ is 
a $(\pi/2-\alpha)$-counterclockwise rotation
taking the vector 
$(\cos\alpha,\sin\alpha)^T$ into $(0,1)^T$.
Now, setting $\bar\rho=1/\bar k$, where
$\bar k$ is defined as in \eqref{bar_k}, 
we define the deformation
\begin{equation}
\label{deformazioni_minime}
y^\alpha(x')
\,:=\,
R_{\alpha}^T\,\circ\,y\,\circ\,\check R_{\alpha}(x')
\,=\, 
\left(
\begin{array}{c}
\bar\rho\sin\alpha\sin\Big(\frac{x_1\sin\alpha-x_2\cos\alpha}{\bar\rho}\Big)
+
\cos\alpha
(x_1\cos\alpha+x_2\sin\alpha)\\
-\bar\rho\cos\alpha
\sin\Big(\frac{x_1\sin\alpha-x_2\cos\alpha}{\bar\rho}\Big)
+
\sin\alpha
(x_1\cos\alpha+x_2\sin\alpha)\\
\bar\rho\cos\alpha
\cos
\Big(\frac{x_1\sin\alpha-x_2\cos\alpha}{\bar\rho}\Big)
\end{array}
\right).
\end{equation}
Simple computations show that $y^\alpha\in\mathcal A_{iso}$
and that,
setting $\nu^\alpha:=\pa_1y^\alpha\wedge\pa_2y^\alpha$,
\begin{equation*}
A_{y^\alpha}(x')
\,:=\,
(\na'y^\alpha(x'))^T\na'\nu^\alpha(x')
\,=\,
\check R_{\alpha}^T\,A_{y} (\check R_{\alpha}x')\,
\check R_{\alpha}
\,=\,
\check R_{\alpha}^T
\left(
\begin{array}{ccc}
\frac1{\bar\rho} & 0\\
0 & 0
\end{array}
\right)
\check R_{\alpha}
\,\in\,
\mathscr A.
\end{equation*}
Therefore, 
we have that
$A_{y^\alpha}\equiv\bar A_+(s)$
for some $s\in[-\bar k/2,\bar k/2]$
and $\alpha\mapsto y^{\alpha}$ 
is a continuous family of deformations 
minimising $\mathscr E^{lim}$.
Note also that $y^\alpha$ satisfies the normalizing conditions
\eqref{norm}, for every $\alpha$. 
Figure \ref{fig:sharon}
shows minimal energy deformed configurations
obtained from the family $\alpha\mapsto y^{\alpha}$.

The existence of a continuous family of
deformations with (constant) minimal energy
shows that a nematic elastomer sheet with 
constant director (perpendicular to the mid-surface)
and thickness-dependent magnitude of the spontaneous strain
(this can be realised by varying the degree of the 
nematic order along the thickness)
realises a ``zero-stiffness'' structure in the sense of 
\cite{Guest}.
These are structures that can undergo large elastic deformations
without requiring external work.
Figure \ref{fig:sharon} show that the nematic sheet 
can accomodate any level of  twisting
with negligible elastic energy in between two
extreme states 
($\alpha=0$ and $\alpha=\pi/2$). 
Of course, zero-stiffness is an idealisation and, in a real system,
effects that have not been taken into 
account in the model will lead 
to small, but non-zero loads in order to change shape.
In the example of Sharon \cite{Levin_Sharon}, 
edge effects cause energy storage which scales as $h^{7/2}$.
This is a higher scaling (with smaller stored energy in the thin film limit $h\to 0$) with respect to the bending one 
($h^3$) that our dimensionally reduced theory is designed to resolve. 
As a consequence, the observed response is much ``softer'' 
than the one expected from the bending stiffness of a sheet.

By contrast, sheets with twist texture
are ``bistable'' in the sense of \cite{Guest_BIS}: they exhibit two distinct possible stable shapes
in the absence of loads (see Figure \ref{fig:bistable}).
Splay-bend sheets have only one shape
minimising the energy under zero loads.


\begin{figure}[htbp]
\begin{center}
\includegraphics[width=10cm]{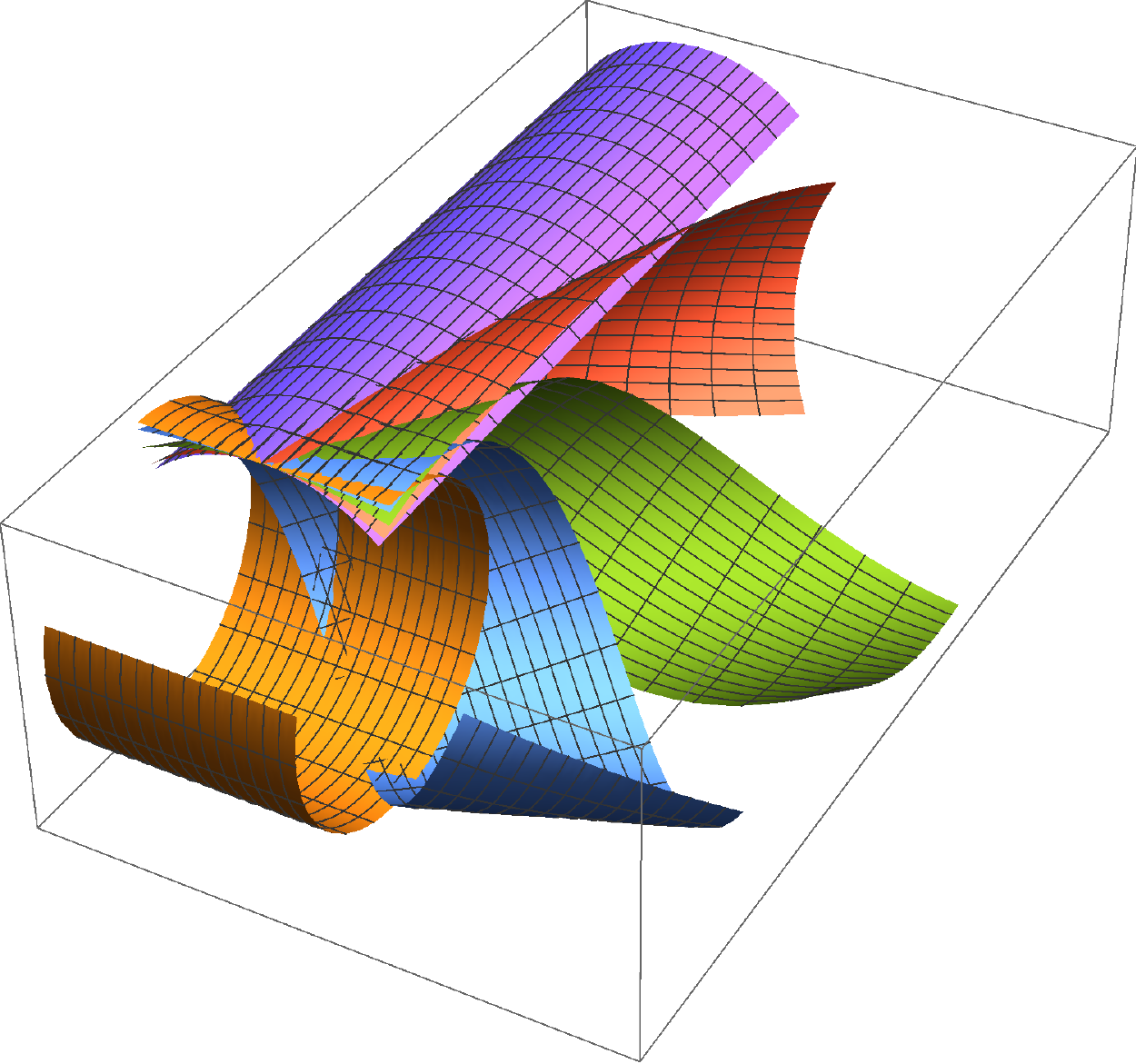}
\end{center}
\caption{
Plot of minimal energy
configurations for the free-energy functional
$\mathscr E^{lim}$ defined in 
\eqref{funzionale_Sharon}--\eqref{funzionale_Sharon_2},
arising from constant director 
$\mathsf e_3$ along the thickness of the sheet.
The configurations are elements of 
the continuous family of deformations \eqref{deformazioni_minime}.
Under zero loads, the system has the same (minimal) energy in each of these configurations, hence it realises a structure
with  \emph{zero-stiffness} to twisting.} 
\label{fig:sharon}
\end{figure}


\subsection*{Acknowledgements}
We gratefully acknowledge the support by the European Research Council through the ERC Advanced Grant 340685-MicroMotility.
We thank S. Guest, R. V. Kohn, and E. Sharon for valuable discussions.




\bibliographystyle{plain}

\begin{thebibliography}{10}

\bibitem{Ag_DeS_1}
V.~Agostiniani and A.~DeSimone.
\newblock {$\Gamma$}-convergence of energies for nematic elastomers in the
  small strain limit.
\newblock {\em Contin. Mech. Thermodyn.}, 23(3):257--274, 2011.

\bibitem{AgDe2}
V.~Agostiniani and A.~DeSimone.
\newblock Ogden-type energies for nematic elastomers.
\newblock {\em International Journal of Non-Linear Mechanics}, 47(2):402 --
  412, 2012.

\bibitem{Aha_Sha_Kup}
H.~Aharoni, E.~Sharon, and R.~Kupferman.
\newblock Geometry of thin nematic elastomer sheets.
\newblock {\em Phys. Rev. Lett.}, 113:257801, Dec 2014.

\bibitem{Arroyo2014}
M.~Arroyo and A.~DeSimone.
\newblock Shape control of active surfaces inspired by the movement of
  euglenids.
\newblock {\em Journal of the Mechanics and Physics of Solids}, 62:99--112,
  2014.

\bibitem{Arroyo2012}
M.~Arroyo, L.~Heltai, D.~Millan, and A.~DeSimone.
\newblock Reverse engineering the euglenoid movement.
\newblock {\em Proceedings of the National Academy of Sciences},
  109(44):17874--17879, October 2012.

\bibitem{BarDeS}
M.~Barchiesi and A.~DeSimone.
\newblock Frank energy for nematic elastomers: a nonlinear model.
\newblock {\em ESAIM-COCV}, 21(2):372--377, 2015.

\bibitem{Bartels}
S.~Bartels, A.~Bonito, and R.~H. Nochetto.
\newblock Bilayer plates: model reduction, {$\Gamma$}-convergent finite element
  approximation, and discrete gradient flow.
\newblock {\em Communications on Pure and Applied Mathematics}, 2015.

\bibitem{Bat_Lew_Sch}
K.~Bhattacharya, M.~Lewicka, and M.~Sch{\"a}ffner.
\newblock Plates with incompatible prestrain.
\newblock {\em Archive for Rational Mechanics and Analysis}, 221(1):143--181,
  2016.

\bibitem{Bla_Ter_War}
P.~Bladon, E.~M. Terentjev, and M.~Warner.
\newblock Transitions and instabilities in liquid crystal elastomers.
\newblock {\em Phys. Rev. E}, 47:R3838--R3840, Jun 1993.

\bibitem{Ce_DeSim}
P.~Cesana and A.~DeSimone.
\newblock Quasiconvex envelopes of energies for nematic elastomers in the small
  strain regime and applications.
\newblock {\em J. Mech. Phys. Solids}, 59(4):787--803, 2011.

\bibitem{Ciarlet_book}
P.G. Ciarlet.
\newblock {\em An Introduction to Differential Geometry with Applications to
  Elasticity}.
\newblock Available online. Springer, 2006.

\bibitem{CoDeDo}
S.~Conti, A.~DeSimone, and G.~Dolzmann.
\newblock Soft elastic response of stretched sheets of nematic elastomers: a
  numerical study.
\newblock {\em J. Mech. Phys. Solids}, 50(7):1431--1451, 2002.

\bibitem{DeS99}
A.~DeSimone.
\newblock Energetics of fine domain structures.
\newblock {\em Ferroelectrics}, 222(1--4):275--284, 1999.

\bibitem{DeSim_Dolz}
A.~DeSimone and G.~Dolzmann.
\newblock Macroscopic response of nematic elastomers via relaxation of a class
  of {$\rm SO(3)$}-invariant energies.
\newblock {\em Arch. Ration. Mech. Anal.}, 161(3):181--204, 2002.

\bibitem{DeTe}
A.~DeSimone and L.~Teresi.
\newblock Elastic energies for nematic elastomers.
\newblock {\em The European Physical Journal E}, 29(2):191--204, 2009.

\bibitem{Flo}
P.J. Flory.
\newblock {\em Principles of Polymer Chemistry}.
\newblock Baker lectures 1948. Cornell University Press, 1953.

\bibitem{F_J_M_2002}
G.~Friesecke, R.~D. James, and S.~M{\"u}ller.
\newblock A theorem on geometric rigidity and the derivation of nonlinear plate
  theory from three-dimensional elasticity.
\newblock {\em Comm. Pure Appl. Math.}, 55(11):1461--1506, 2002.

\bibitem{FukDyn}
A.~{Fukunaga}, K.~{Urayama}, T.~{Takigawa}, A.~{DeSimone}, and L.~{Teresi}.
\newblock Dynamics of electro-opto-mechanical effects in swollen nematic
  elastomers.
\newblock {\em Macromolecules}, 41(23):9389--9396, 2008.

\bibitem{Guest}
S.~D. Guest, E.~Kebadze, and S.~Pellegrino.
\newblock A zero--stiffness elastic shell structure.
\newblock {\em Journal of Mechanics of Materials and Structures},
  6(1--4):203--212, 2011.

\bibitem{Guest_BIS}
S.~D. Guest and S.~Pellegrino.
\newblock Analytical models for bistable cylindrical shells.
\newblock {\em Proceedings of the Royal Society A}, 462:839--854, 2006.

\bibitem{He}
L.H. He.
\newblock Response of constrained glassy splay-bend and twist nematic sheets to
  light and heat.
\newblock {\em The European Physical Journal E}, 36(8), 2013.

\bibitem{Kle_Efr_Sha_2007}
Y.~Klein, E~Efrati, and E~Sharon.
\newblock Shaping of elastic sheets by prescription of non-euclidean metrics.
\newblock {\em Science}, 315(5815):1116--1120, 2007.

\bibitem{Levin_Sharon}
I.~Levin and E.~Sharon.
\newblock Anomalously soft non-euclidean springs.
\newblock {\em Phys. Rev. Lett.}, 116:035502, Jan 2016.

\bibitem{Lew_Pak2011}
M.~Lewicka and R.~Pakzad.
\newblock {Scaling laws for non-Euclidean plates and the ${\rm W}^{2,2}$
  isometric immersions of Riemannian metrics}.
\newblock {\em ESAIM: Control Optim. Calc. Var}, 17(4):1158--1173, 11 2011.

\bibitem{LucantonioJCP2017}
Alessandro Lucantonio and Antonio DeSimone.
\newblock Computational design of shape-programmable gel plates.
\newblock {\em Journal of Computational Physics}, page submitted, 2017.

\bibitem{Bat_Mod_War}
C.~D. Modes, K.~Bhattacharya, and M.~Warner.
\newblock Gaussian curvature from flat elastica sheets.
\newblock {\em Proc. Roy. Soc. A}, 467(2128):1121--1140, 2011.

\bibitem{Mod_War2015}
C.~D. Modes and M.~Warner.
\newblock {Negative Gaussian curvature from induced metric changes}.
\newblock {\em Phys. Rev. E}, 92:010401, Jul 2015.

\bibitem{Mostajeran}
C.~Mostajeran.
\newblock Curvature generation in nematic surfaces.
\newblock {\em Phys. Rev. E}, 91:062405, Jun 2015.

\bibitem{Pismen}
L.~M. Pismen.
\newblock Metric theory of nematoelastic shells.
\newblock {\em Phys. Rev. E}, 90:060501, Dec 2014.

\bibitem{hybrid}
Y.~{Sawa}, K.~{Urayama}, T.~{Takigawa}, A.~{DeSimone}, and L.~{Teresi}.
\newblock Thermally driven giant bending of liquid crystal elastomer films with
  hybrid alignment.
\newblock {\em Macromolecules}, 43:4362--4369, May 2010.

\bibitem{Sawa2011}
Y.~Sawa, F.~Ye, K.~Urayama, T.~Takigawa, V.~Gimenez-Pinto, R.~L.~B. Selinger,
  and J.~V. Selinger.
\newblock Shape selection of twist-nematic-elastomer ribbons.
\newblock {\em PNAS}, 108(16):6364--6368, 2011.

\bibitem{Schmidt2007}
B.~Schmidt.
\newblock Plate theory for stressed heterogeneous multilayers of finite bending
  energy.
\newblock {\em J. Math. Pures Appl.}, 88(1):107 -- 122, 2007.

\bibitem{Armon}
A.~Shahaf, E.~Efrati, R.~Kupferman, and E.~Sharon.
\newblock Geometry and mechanics in the opening of chiral seed pods.
\newblock {\em Science}, 333(6050):1726--1730, 2011.

\bibitem{Urayama2013}
K.~Urayama.
\newblock Switching shapes of nematic elastomers with various director
  configurations.
\newblock {\em Reactive and Functional Polymers}, 73(7):885--890, 2013.
\newblock Challenges and Emerging Technologies in the Polymer Gels.

\bibitem{War_Mod_Cor_1}
M.~Warner, C.~D. Modes, and D.~Corbett.
\newblock Curvature in nematic elastica responding to light and heat.
\newblock {\em Proceedings of the Royal Society of London A: Mathematical,
  Physical and Engineering Sciences}, 466(2122):2975--2989, 2010.

\bibitem{War_Mod_Cor_2}
M.~Warner, C.~D. Modes, and D.~Corbett.
\newblock Suppression of curvature in nematic elastica.
\newblock {\em Proc. R. Soc. A}, pages 3561--3578, 2010.

\end{thebibliography}


\end{document}